\newtheorem{theorem}{Theorem}
\newtheorem{lemma}{Lemma}
\newtheorem{corollary}{Corollary}
\newtheorem{remark}{Remark}
\DeclareMathOperator{\lcm}{lcm}
\DeclareMathOperator{\ord}{ord}
\newenvironment{customlem}[1]
  {\innercustomlem}
  {\endinnercustomlem}
\title[Sums of unit fractions]{The number of solutions of the Erd\H{o}s-Straus Equation and sums of $k$ unit fractions}
\author[C. Elsholtz]{Christian Elsholtz}
\address[C. Elsholtz]{
Graz University of Technology, Institute of Analysis and Number Theory, Kopernikusgasse 24/II, 8010 Graz, Austria}
\email{elsholtz@math.tugraz.at}
\author[S. Planitzer]{Stefan Planitzer}
\address[S. Planitzer]{
Graz University of Technology, Institute of Analysis and Number Theory, Kopernikusgasse 24/II, 8010 Graz, Austria}
\email{planitzer@math.tugraz.at}
\subjclass[2010]{Primary: 11D68, 
                 Secondary: 11D72 
}
\begin{document}

\begin{abstract}
We prove new upper bounds for the number of representations of an arbitrary rational number as a sum of three unit fractions. In particular, for fixed $m$ there are at most $\mathcal{O}_{\epsilon}(n^{\nicefrac{3}{5}+\epsilon})$ solutions of $\frac{m}{n}=\frac{1}{a_1}+\frac{1}{a_2}+\frac{1}{a_3}$. This improves upon a result of Browning and Elsholtz (2011) and extends a result of Elsholtz and Tao (2013) who proved this when $m=4$ and $n$ is a prime. Moreover there exists an algorithm finding all solutions in expected running time $\mathcal{O}_{\epsilon}\left(n^{\epsilon}\left(\frac{n^3}{m^2}\right)^{\nicefrac{1}{5}}\right)$, for any $\epsilon >0$. We also improve a bound on the maximum number of representations of a rational number as a sum of $k$ unit fractions. Furthermore, we also improve lower bounds. In particular we prove that for given $m\in \mathbb{N}$ in every reduced residue class $e \bmod f$ there exist infinitely many primes $p$ such that the number of solutions of the equation $\frac{m}{p}=\frac{1}{a_1}+\frac{1}{a_2}+\frac{1}{a_3}$ is $\gg_{f,m} \exp\left(\left(\frac{5\log 2}{12 \lcm(m,f)}+o_{f,m}(1)\right)\frac{\log p}{\log \log p}\right)$. Previously the best known lower bound of this type was of order $(\log p)^{0.549}$.
\end{abstract}

\maketitle

\section{Introduction}

We consider the problem of finding upper bounds for the number of solutions in positive integers $a_1$, $a_2$ and $a_3$ of equations of the form
\begin{equation} \label{eq: 3 fractions main equation}
\frac{m}{n}=\frac{1}{a_1}+\frac{1}{a_2}+\frac{1}{a_3}
\end{equation}
where $m,n \in \mathbb{N}$ are fixed. In the case when $m=4$ we call equation~\eqref{eq: 3 fractions main equation} Erd\H{o}s-Straus equation. The Erd\H{o}s-Straus conjecture states that this equation has at least one solution for any $n>1$ (see \cite{CountingThe} and \cite{UnsolvedProblems}*{D11} for classical results concerning the Erd\H{o}s-Straus equation and several related problems, as well as~\cite{PaulErdoes} for a survey of the work of Erd\H{o}s on egyptian fractions). Also the more general equation
\begin{equation} \label{eq: k fractions main equation}
\frac{m}{n}=\sum_{i=1}^k\frac{1}{a_i}
\end{equation}
for $m,n \in \mathbb{N}$ fixed and $a_1, \ldots, a_k \in \mathbb{N}$ received some attention. Browning and Elsholtz~\cite{TheNumber} found upper bounds for the number of solutions of \eqref{eq: k fractions main equation}. For the special case $m=n=1$ they were able to improve a result of S{\'a}ndor~\cite{OnThe} and proved that there are at most $c_0^{(\nicefrac{5}{24}+\epsilon)2^{k}}$ representations of $1$ as a sum of $k$ unit fractions, for any $\epsilon >0$ and sufficiently large $k$. Here $c_0=\lim_{n \rightarrow \infty}u_n^{2^{-n}}=1.264\ldots$ where $u_1=1$ and $u_{n+1}=u_n(u_n+1)$. On the other hand Konyagin~\cite{DoubleExponential} proved a lower bound of order $\exp\left(\exp\left(\left(\frac{(\log 2)(\log 3)}{3}+o(1)\right)\frac{k}{\log k}\right)\right)$ for the number of these representations with distinct denominators. While the Erd\H{o}s-Straus conjecture is about representing certain rational numbers as a sum of just three unit fractions, Martin~\cite{DenseEgyptian} worked on representations of positive rationals as sums of many unit fractions. In particular he proved that every positive rational number $r$ has a representation of the form $r=\sum_{s\in S}\frac{1}{s}$, where the set $S$ contains a positive proportion of the integers less than any sufficiently large real number $x$.      

Chen et.al.~\cite{EgyptianFractions} dealt with representations of $1$ as a sum of $k$ distinct unit fractions where the denominators satisfy certain restrictions (like all of them being odd). Several results on representations of rational numbers as a sum of unit fractions with restrictions on the denominators can be found in the work of Graham~\cites{OnFinite, OnFiniteSums, PaulErdoes}. Elsholtz~\cite{EgyptianFractionsWith} proved a lower bound of similar order as the one of Konyagin for the number of representations of $1$ as a sum of $k$ distinct unit fractions with odd denominators.

For sums of $k$ unit fractions we adopt the notation of~\cite{TheNumber} and define $f_k(m,n)$ to be the number of solutions $(a_1, a_2, \ldots, a_k) \in \mathbb{N}^k$ of equation \eqref{eq: k fractions main equation} with $a_1 \leq a_2 \leq \ldots \leq a_k$, i.e.
$$f_k(m,n)=\left|\left\{(a_1,a_2,\ldots, a_k)\in \mathbb{N}^k:\frac{m}{n}=\frac{1}{a_1}+\frac{1}{a_2}+\cdots+\frac{1}{a_k}, a_1 \leq a_2 \ldots \leq a_k \right\}\right|.$$
Concerning equation \eqref{eq: 3 fractions main equation} with $m=4$ the results of Elsholtz and Tao~\cite{CountingThe} show that the number of solutions $f_3(4,n)$ is related to some divisor questions and is on average a power of $\log n$ (at least when $n$ is prime). It even seems possible that for fixed $m \in \mathbb{N}$ and any $\epsilon>0$ the number of representations of $\frac{m}{n}$ as a sum of $k$ unit fractions is bounded by $\mathcal{O}_{k,\epsilon}(n^{\epsilon})$. More details on this are informally and heuristically discussed in Section~\ref{sec: heuristics}. For general $m$ and $n$ the best known upper bound on the number of solutions of \eqref{eq: 3 fractions main equation} is due to Browning and Elsholtz~\cite{TheNumber}*{Theorem 2} who proved an upper bound of order $\mathcal{O}_{\epsilon}(n^{\epsilon}\left(\frac{n}{m}\right)^{\nicefrac{2}{3}})$. In the case of the Erd\H{o}s-Straus equation with $n=p$ prime Elsholtz and Tao~\cite{CountingThe}*{Proposition 1.7} have improved this bound to $\mathcal{O}_{\epsilon}(p^{\nicefrac{3}{5}+\epsilon})$. It is known that this type of question is easier to study, when the denominator is prime.

Our main result will be the following theorem which provides an upper bound on the number of solutions of equation \eqref{eq: 3 fractions main equation}.

\begin{theorem} \label{thm: main theorem}
For any $m,n \in \mathbb{N}$ and any $\epsilon > 0$ there are at most $\mathcal{O}_{\epsilon}\left(n^{\epsilon}\left(\frac{n^3}{m^2}\right)^{\nicefrac{1}{5}}\right)$ solutions of the equation
$$\frac{m}{n} =\frac{1}{a_1}+\frac{1}{a_2}+\frac{1}{a_3}$$
in positive integers $a_1$, $a_2$ and $a_3$.
\end{theorem}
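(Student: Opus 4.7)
By symmetry I may assume $a_1 \le a_2 \le a_3$, losing only a factor $\le 6$ in the count. The inequality $\frac{m}{n}\le\frac{3}{a_1}$ forces $a_1 \le 3n/m$, while $\frac{1}{a_1}\le\frac{m}{n}$ forces $a_1 \ge n/m$. Setting $t := ma_1 - n \in [1,2n]$, the equation is equivalent to
\[
\frac{t}{na_1} \;=\; \frac{1}{a_2}+\frac{1}{a_3}.
\]
Writing $g := \gcd(t,na_1)$, $t' := t/g$ and $q := na_1/g$ (so $\gcd(t',q)=1$), clearing denominators and completing the product gives
\[
(t'a_2 - q)(t'a_3 - q) \;=\; q^2.
\]
Hence each pair $(a_2,a_3)$ is encoded by a divisor $d$ of $q^2$ satisfying the congruence $d \equiv -q \pmod{t'}$.

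\textbf{Why the naive bound is too weak.} Ignoring the congruence, the number of admissible $d$ is at most $\tau(q^2) \ll_\epsilon (na_1)^\epsilon$, and summing trivially over $a_1 \le 3n/m$ only produces the bound $\mathcal{O}_\epsilon(n^{1+\epsilon}/m)$, essentially the bound of~\cite{TheNumber}. To reach $n^{\nicefrac{3}{5}+\epsilon}m^{-\nicefrac{2}{5}}$ I must exploit that the congruence $d \equiv -q \pmod{t'}$ cuts the divisor count by a factor of roughly $1/t'$ on average.

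\textbf{Case split and optimisation.} I introduce a cut-off $X \in [n/m,3n/m]$ and split the range of $a_1$. For $a_1 \le X$ the trivial count contributes $\ll X\,n^\epsilon$ solutions in total. For $a_1 > X$ one has $t \ge mX-n$, so the modulus $t'$ is comparatively large and one expects only $\tau(q^2)/t' \ll n^\epsilon\, g/(ma_1 - n)$ admissible divisors on average. To make this rigorous I unfold the divisor sum via the six-parameter parametrisation used in~\cite{TheNumber}, introducing auxiliary divisors of $n$, of $a_1$ and of $t$ with controlled gcds, and then carry out a case analysis according to the relative sizes of these parameters. Combined with standard divisor moment estimates such as $\sum_{k\le Y}\tau(k)^2 \ll Y\log^3 Y$, the regime $a_1 > X$ contributes $\mathcal{O}_\epsilon\!\left(n^\epsilon\,n^{\alpha}/(m^{\beta}X^{\gamma})\right)$ for explicit exponents $\alpha,\beta,\gamma$. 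Balancing this against $X\,n^\epsilon$ forces $X \asymp n^{\nicefrac{3}{5}}/m^{\nicefrac{2}{5}}$, which yields the claimed bound.

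\textbf{Main obstacle.} The technical heart is the uniform estimate, in $m,n$ and $a_1$, for the count of divisors of $q^2$ lying in a fixed residue class modulo $t'$. In the prime case $n=p$ treated in~\cite{CountingThe}, the factor $q = pa_1/g$ has an essentially forced multiplicative structure (a divisor of $a_1$ times at most one factor of $p$), so the divisor sum in progression is directly tractable. For composite $n$ the divisors of $n$, of $a_1$ and of $ma_1-n$ interact non-trivially, and a careful multi-dimensional case split (tracking which divisors of $n$ fall into $g$, how $\gcd(m,n)$ interacts with $t$, and when $\gcd(n,a_1)$ is large or small) is needed to keep every sub-sum below the target and, in particular, to extract the correct $m^{-\nicefrac{2}{5}}$ factor.
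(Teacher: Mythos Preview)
Your proposal is a strategy outline, not a proof: the step you yourself label the ``technical heart'' --- bounding, uniformly in $m,n,a_1$, the number of divisors of $q^2$ in a fixed residue class modulo $t'$ --- is never carried out. You assert that the regime $a_1>X$ contributes $\mathcal{O}_\epsilon\!\left(n^\epsilon n^\alpha/(m^\beta X^\gamma)\right)$ ``for explicit exponents $\alpha,\beta,\gamma$'' but never produce them, so the balancing that allegedly ``forces $X\asymp n^{3/5}/m^{2/5}$'' has no content. Invoking the six-parameter parametrisation of Browning--Elsholtz does not close the gap either: that parametrisation yields only the exponent $2/3$, and you give no indication of what additional input upgrades it to $3/5$. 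Nor is the heuristic ``the congruence cuts the divisor count by $1/t'$ on average'' easy to make rigorous here: divisors are not equidistributed in residue classes pointwise, $g=\gcd(t,na_1)$ can be as large as $t$, and naively summing $n^\epsilon g/t$ over the range of $a_1$ does not visibly produce a power saving of the required shape.

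The paper's argument is structurally quite different and involves no cut-off, no averaging, and no divisors-in-progressions estimate. It first fixes the \emph{pattern} $(n_1,n_2,n_3)$ with $n_i=\gcd(a_i,n)$ (only $\mathcal{O}_\epsilon(n^\epsilon)$ choices), then parametrises the cofactors $t_i=a_i/n_i$ by their \emph{relative greatest common divisors} $x_{12},x_{13},x_{23},x_{123}$. From the resulting identity it extracts three auxiliary positive integers $w,y,z$ satisfying a small system of equations, and proves the pointwise product bound
\[
y\cdot z\cdot (x_{12}x_{13})\cdot (x_{12}x_{123})^2 \;\ll\; \frac{n^3}{m^2}.
\]
Hence at least one of the four quantities $y$, $z$, $x_{12}x_{13}$, $x_{12}x_{123}$ is $\ll (n^3/m^2)^{1/5}$, and in each of these four cases a short chain of divisor bounds applied to explicit equations in the system determines all remaining parameters in $\mathcal{O}_\epsilon(n^\epsilon)$ ways. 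The exponent $1/5$ thus drops out of an algebraic identity rather than an optimisation over a threshold, and the argument is pointwise for every individual solution rather than on average over $a_1$.
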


Note that this improves upon the bound of Browning and Elsholtz in the range $m \ll n^{\nicefrac{1}{4}}$. As a corollary we get that the Elsholtz-Tao bound for the number of solutions of the Erd\H{o}s-Straus equation is true for arbitrary denominators $n \in \mathbb{N}$.

\begin{corollary}
The Erd\H{o}s-Straus equation
$$\frac{4}{n}=\frac{1}{a_1}+\frac{1}{a_2}+\frac{1}{a_3}$$
has at most $\mathcal{O}_{\epsilon}(n^{\nicefrac{3}{5}+\epsilon})$ solutions in positive integers $a_1$, $a_2$ and $a_3$.
\end{corollary}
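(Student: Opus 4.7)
The plan is to simply specialize Theorem~\ref{thm: main theorem} to the case $m=4$. Substituting $m=4$ into the bound $\mathcal{O}_{\epsilon}\!\left(n^{\epsilon}\left(\tfrac{n^{3}}{m^{2}}\right)^{1/5}\right)$ gives
\[
\mathcal{O}_{\epsilon}\!\left(n^{\epsilon}\left(\tfrac{n^{3}}{16}\right)^{1/5}\right) = \mathcal{O}_{\epsilon}\!\left(16^{-1/5}\, n^{3/5+\epsilon}\right) = \mathcal{O}_{\epsilon}\!\left(n^{3/5+\epsilon}\right),
\]
since the factor $16^{-1/5}$ is an absolute constant absorbed by the $\mathcal{O}_\epsilon$-notation. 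There is no genuine obstacle here; the corollary is a one-line consequence once the main theorem is available, and its only role is to advertise that the Elsholtz--Tao bound for prime denominators in fact extends, via Theorem~\ref{thm: main theorem}, to arbitrary $n \in \mathbb{N}$.
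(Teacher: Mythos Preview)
Your proposal is correct and matches the paper's treatment exactly: the corollary is stated immediately after Theorem~\ref{thm: main theorem} as an instant specialization to $m=4$, with no separate proof given. There is nothing to add.
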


We also prove the following algorithmic version of Theorem~\ref{thm: main theorem} with a matching upper bound for the expected running time\footnote{For a definition of expected running time see the proof of this corollary at the end of section~\ref{sec: 3 unit fractions}.}.

\begin{corollary} \label{cor: algorithmic aspects}
There exists an algorithm with an expected running time of order $\mathcal{O}_{\epsilon}\left(n^{\epsilon}\left(\frac{n^3}{m^2}\right)^{\nicefrac{1}{5}}\right)$, for any $\epsilon >0$, which lists all representations of the rational number $\frac{m}{n}$ as a sum of three unit fractions. Furthermore all representations of $\frac{m}{n}$ as a sum of $k>3$ unit fractions may be found in expected time $\mathcal{O}_{\epsilon,k}\left(n^{2^{k-3}(\nicefrac{8}{5}+\epsilon)-1}\right)$, for any $\epsilon >0$. 
\end{corollary}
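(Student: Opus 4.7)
The plan is to promote the counting argument behind Theorem~\ref{thm: main theorem} to an explicit enumeration algorithm, and then bootstrap from $k=3$ to arbitrary $k$ by induction on the number of fractions. For $k=3$ I would inspect the proof of Theorem~\ref{thm: main theorem} and check that it is constructive: each solution is produced by fixing a tuple of auxiliary parameters (notably the smallest denominator $a_1 \leq \nicefrac{3n}{m}$ together with one or two divisor-like parameters), and the total number of parameter tuples examined is precisely the quantity that the counting argument bounds by $\mathcal{O}_{\epsilon}(n^{\epsilon}(\nicefrac{n^3}{m^2})^{\nicefrac{1}{5}})$. Each tuple is turned into a candidate triple $(a_1,a_2,a_3)$ using only gcd computations, divisibility tests, and factorizations of integers of size $\mathcal{O}(n^2)$. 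The first two cost $(\log n)^{\mathcal{O}(1)}$, and integer factorization can be performed in expected time $N^{o(1)}$ via Lenstra's elliptic curve method (or the number field sieve); this randomized subexponential cost is the formal meaning of ``expected running time'' that the footnote refers to, and it is absorbed in the $n^\epsilon$ factor. Hence the algorithm lists all three-fraction representations within the claimed time bound.

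For $k \geq 4$ I would induct on $k$. Ordering the denominators $a_1 \leq a_2 \leq \cdots \leq a_k$ forces $a_1 \leq \nicefrac{kn}{m}$, so the algorithm loops over $a_1$ in this range and recursively calls the $(k-1)$-algorithm on the reduced equation $\tfrac{ma_1 - n}{na_1} = \tfrac{1}{a_2} + \cdots + \tfrac{1}{a_k}$, whose denominator is at most $na_1 = \mathcal{O}_k(\nicefrac{n^2}{m})$. Writing $T(k)$ for the exponent of $n$ in the expected running time (and discarding the $m^{-\nicefrac{2}{5}}$ improvement after one recursive step, since it is not tracked past the first call), the loop yields the recursion $T(k) \leq 1 + 2T(k-1)$, where the factor $2$ comes from the new denominator being of size $\mathcal{O}(n^2)$ and the additive $1$ from iterating $a_1$ over $\mathcal{O}(n)$ values. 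Combined with the base case $T(3) = \nicefrac{3}{5}$ this solves to $T(k) = 2^{k-3}\cdot \nicefrac{8}{5} - 1$, matching the stated exponent.

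The main obstacle is to phrase the induction hypothesis so that the recursive call is legitimate: the reduced rational $\tfrac{ma_1 - n}{na_1}$ has numerator varying with $a_1$ and is not a priori in lowest terms, so the hypothesis must be stated for arbitrary $(m,n)\in\mathbb{N}^2$ and must depend only on $n$. A secondary technicality is to verify that every step of the enumeration underlying Theorem~\ref{thm: main theorem} actually emits a candidate for every solution it counts, so that the theorem can be invoked as a bona fide enumeration subroutine rather than merely a counting statement; this is implicit in its proof, but one needs to check that no candidate is silently absorbed into a multiplicative constant along the way.
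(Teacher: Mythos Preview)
Your plan is correct and matches the paper's argument in both parts. One correction for the $k=3$ subroutine: the constructive enumeration in the proof of Theorem~\ref{thm: main theorem} does \emph{not} loop over the smallest denominator $a_1$ (that alone would already cost $\mathcal{O}(n/m)$, exceeding the target when $m$ is small). Instead one loops over the $\mathcal{O}_\epsilon(n^\epsilon)$ patterns $(n_1,n_2,n_3)$ and, for each pattern, over the integers $y$, $z$, $x_{12}x_{13}$, $x_{12}x_{123}$ in a range of length $\mathcal{O}\bigl((n^3/m^2)^{1/5}\bigr)$; equations \eqref{eq: stystem equation 1}--\eqref{eq: system equation 6} then determine $x_{12},x_{13},x_{23},x_{123}$ (and hence $(a_1,a_2,a_3)$) after a bounded number of factorizations, each costing $\mathcal{O}_\epsilon(n^\epsilon)$ expected time via the Lenstra--Pomerance algorithm. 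This is exactly your ``turn the counting into an enumeration'' idea, just with the correct parameter set.

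For $k>3$ the paper peels off the first $k-3$ denominators in one batch (bounding $a_i\le \alpha_i n^{2^{i-1}}$, so at most $\prod_{i\le k-3}a_i\ll_k n^{2^{k-3}-1}$ tuples, each leaving a $3$-fraction problem with denominator $\ll_k n^{2^{k-3}}$), whereas you recurse one denominator at a time. Your recursion $T(k)=1+2T(k-1)$ with $T(3)=\tfrac{3}{5}$ is precisely the unrolling of the paper's direct count, so the two are equivalent; your remark that the hypothesis must be stated uniformly in the numerator is the right way to make the inductive version go through.
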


For sums of $k$ unit fractions we will prove the following result.
\begin{theorem} \label{thm: k fractions theorem}
We have
$$f_4(m,n) \ll_{\epsilon}n^{\epsilon}\left(\frac{n^{\nicefrac{4}{3}}}{m^{\nicefrac{2}{3}}}+\frac{n^{\nicefrac{28}{17}}}{m^{\nicefrac{8}{5}}}\right)$$
and for any $k \geq 5$
$$f_k(m,n)\ll_{\epsilon} (kn)^{\epsilon}\left(\frac{k^{\nicefrac{4}{3}}n^2}{m}\right)^{\nicefrac{28}{17}\cdot 2^{k-5}}.$$
\end{theorem}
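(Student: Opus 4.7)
The strategy is an inductive reduction based on fixing the smallest denominator. Since in any solution of $\frac{m}{n}=\sum_{i=1}^{k}1/a_i$ with $a_1\le\cdots\le a_k$ we have $\lceil n/m\rceil\le a_1\le\lfloor kn/m\rfloor$, and for each such $a_1$ the remainder $(a_2,\ldots,a_k)$ solves $\sum_{i=2}^{k}1/a_i=(ma_1-n)/(na_1)$, one obtains
\[
f_k(m,n)\;\le\;\sum_{a_1=\lceil n/m\rceil}^{\lfloor kn/m\rfloor} f_{k-1}(ma_1-n,\,na_1).\qquad(\star)
\]
Writing $v:=ma_1-n$, the values of $v$ form an arithmetic progression of common difference $m$ starting at some $r_0\in[1,m]$ and terminating by $(k-1)n$; also $na_1=n(n+v)/m$.

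For $f_4$, I plug Theorem~\ref{thm: main theorem} into the right-hand side of $(\star)$ with $k=4$, obtaining $f_4(m,n)\ll_{\epsilon} n^{\epsilon}\sum_v\left((na_1)^3/v^2\right)^{1/5}$. To extract the first term of the claim, I would instead apply the Browning--Elsholtz bound $f_3(m',n')\ll (n'/m')^{2/3}$ to the singular summand $v=r_0$, which is preferable there and gives the contribution $n^{4/3}/m^{2/3}$. For the remaining bulk $v\ge m$, I would use Theorem~\ref{thm: main theorem}, rewriting the sum as $(n/m)^{3/5}\sum_v(n+v)^{3/5}/v^{2/5}$ and estimating it dyadically with a split at $v=n$, exploiting the progression spacing to obtain the second term $n^{28/17}/m^{8/5}$.

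For $f_5$, applying $(\star)$ with $k=5$ and substituting the freshly proved $f_4$ bound, the dominant contribution is
\[
\sum_v\bigl(n(n+v)/m\bigr)^{28/17}/v^{8/5}.
\]
Since $8/5>1$, the progression sum converges and is controlled by its initial segment, producing $(n^2/m)^{28/17}$. The residual contributions (from the first term of the $f_4$ bound, and from $v>n$) are shown to be smaller in the regime $m<n^2$; the $k$-dependent factor $5^{4/3\cdot 28/17}$ is absorbed into the implicit constant, giving the claimed bound at $k=5$. For $k\ge 6$, I proceed by induction: assuming $f_{k-1}(m,n)\ll_{\epsilon}(kn)^{\epsilon}(k^{4/3}n^2/m)^{\alpha_{k-1}}$ with $\alpha_{k-1}=(28/17)\cdot 2^{k-6}>1$, application of $(\star)$ and the hypothesis yields $f_k\ll k^{O(1)}n^{\epsilon}\sum_v((na_1)^2/v)^{\alpha_{k-1}}$. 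Because $\alpha_{k-1}>1$, the progression sum converges, the dominant contribution is $(n^2/m)^{2\alpha_{k-1}}=(n^2/m)^{\alpha_k}$, and the $k$-dependent factors combine to give the form $(k^{4/3}n^2/m)^{\alpha_k}$.

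The main obstacle is the $f_4$ step: a naive summation of Theorem~\ref{thm: main theorem} in $(\star)$ gives only $n^{9/5}/m^{8/5}$ for the bulk contribution, so achieving the sharper $n^{28/17}/m^{8/5}$ requires mixing Theorem~\ref{thm: main theorem} with the Browning--Elsholtz bound over appropriately chosen ranges of $v$, together with careful exploitation of the arithmetic progression structure modulo~$m$. Once this step is completed the induction for $k\ge 5$ is essentially mechanical, because the convergence of the tail $\sum_v v^{-\alpha}$ for $\alpha>1$ makes the recursion $\alpha_k=2\alpha_{k-1}$ self-sustaining.
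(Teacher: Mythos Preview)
Your inductive scheme for $k\ge 5$ is essentially the paper's Lemma~C, and your treatment of the small-$v$ contribution via the Browning--Elsholtz bound matches the paper. The gap is exactly where you flag it: the bulk contribution to $f_4$.

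Your proposed fix---mixing Theorem~\ref{thm: main theorem} with the Browning--Elsholtz bound over different ranges of $v$---cannot produce the exponent $28/17$. Both available $f_3$ bounds have the form $f_3(v,N)\ll_\epsilon N^\epsilon(N/v^\beta)^\gamma$ with $\beta\gamma<1$ (namely $\beta\gamma=\tfrac{2}{5}$ for Theorem~\ref{thm: main theorem} and $\beta\gamma=\tfrac{2}{3}$ for Browning--Elsholtz). Hence in the sum $\sum_{v\le (k-1)n} v^{-\beta\gamma}$ the large values of $v$ always dominate, regardless of where you switch between the two bounds; the best you can obtain this way is $n^{5/3}/m^{5/3}$ (pure Browning--Elsholtz, recovering their old $f_4$ bound) or $n^{9/5}/m^{8/5}$ (pure Theorem~\ref{thm: main theorem}), and no interpolation beats both. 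The arithmetic-progression spacing in $v$ only affects the $m$-exponent, not the $n$-exponent.

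The paper does something genuinely different for the range $v>n^\delta$: it does \emph{not} reduce to $f_3$ at all. Instead it observes that $a_1>(n+n^\delta)/m$ forces the \emph{second} denominator to be small, $a_2\ll n^{2-\delta}/m$, and then carries out a direct parametrisation of the four-fraction equation by relative greatest common divisors of $t_1,t_2,t_3,t_4$. This yields a product inequality $(t_1t_2)^4\ll n^{12-4\delta}/m^8$, which is split into five factors, each of which determines the remaining parameters up to $n^\epsilon$ choices via divisor arguments and a factorisation trick of the shape $(C_0A-C_2)(C_0B-C_1)=C_0C_3+C_1C_2$. This gives $f_4^{(2)}(m,n)\ll_\epsilon n^\epsilon n^{(12-4\delta)/5}/m^{8/5}$, and balancing against the small-$v$ part at $\delta=16/17$ produces $28/17$. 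You are missing this entire four-variable analysis; without it the $f_4$ bound, and hence the whole induction, does not reach the stated exponents.
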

Keeping in mind that $\frac{28}{17}=1.64705\ldots$, Theorem~\ref{thm: k fractions theorem} may be compared with the following bounds from \cite{TheNumber}*{Theorem 3}:
\begin{align*}
f_4(m,n)&\ll_{\epsilon} n^{\epsilon}\left(\frac{n^{\nicefrac{4}{3}}}{m^{\nicefrac{2}{3}}}+\left(\frac{n}{m}\right)^{\nicefrac{5}{3}}\right),\\
f_k(m,n)&\ll_{\epsilon} (kn)^{\epsilon}\left(\frac{k^{\nicefrac{4}{3}}n^2}{m}\right)^{\nicefrac{5}{3} \cdot2^{k-5}} \text{, for } k\geq 5.
\end{align*}

A well studied special case of Theorem~\ref{thm: k fractions theorem} concerns representations of $1$ as a sum of $k$ unit fractions. Browning and Elsholtz~\cite{TheNumber} mention several related problems which are studied in the literature and can be improved using better upper bounds on $f_k(m,n)$. We summarize these results in the following corollary.

\begin{corollary}
\begin{enumerate}
\item For any $\epsilon >0$ we have that 
$$f_k(1,1)\ll_{\epsilon}k^{\nicefrac{7}{51}\cdot 2^{k-1}+\epsilon}.$$
\item Let $u_n$ be the sequence recursively defined by $u_0=1$ and $u_{n+1}=u_n(u_n+1)$ and set  $c_0=\lim_{n \rightarrow \infty}u_n^{2^{-n}}$. Then for $\epsilon >0$ and $k\geq k(\epsilon)$ we have
$$f_k(1,1) < c_0^{(\nicefrac{7}{17}+\epsilon)2^{k-1}}.$$
\item For $\epsilon>0$ and $k\geq k(\epsilon)$ the number $S(k)$ of positive integer solutions of the equation
$$1=\sum_{i=1}^k\frac{1}{a_i}+\frac{1}{\prod_{i=1}^ka_i}$$
is bounded from above by $c_0^{(\nicefrac{7}{17}+\epsilon)2^k}$.
\end{enumerate}
\end{corollary}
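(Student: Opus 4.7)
The plan is to derive all three parts of the corollary by inserting the improved exponent $\nicefrac{28}{17}$ from Theorem~\ref{thm: k fractions theorem} into the framework of Browning and Elsholtz~\cite{TheNumber}, whose arguments for the analogous bounds with exponent $\nicefrac{5}{3}$ carry over essentially verbatim.

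For part (1) I would simply specialise Theorem~\ref{thm: k fractions theorem} to $m=n=1$. The dominant factor becomes $(k^{\nicefrac{4}{3}})^{\nicefrac{28}{17}\cdot 2^{k-5}}$, and
$$\tfrac{4}{3}\cdot \tfrac{28}{17}\cdot 2^{k-5} \;=\; \tfrac{112}{51\cdot 16}\cdot 2^{k-1} \;=\; \tfrac{7}{51}\cdot 2^{k-1},$$
so $f_k(1,1) \ll_\epsilon k^{\nicefrac{7}{51}\cdot 2^{k-1}+\epsilon}$, which is the bound claimed in part (1).

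For part (2) I would reuse the reduction of~\cite{TheNumber}. The key structural input is that any representation $1=\sum_{i=1}^k 1/a_i$ with $a_1\leq\ldots\leq a_k$ satisfies $a_1\leq k$ by pigeonhole, hence
$$f_k(1,1) \;\leq\; \sum_{a_1=2}^k f_{k-1}(a_1-1,\,a_1).$$
Iterating this reduction a bounded number of times reduces matters to an expression that can be estimated by the general bound of Theorem~\ref{thm: k fractions theorem}. Substituting our exponent $\nicefrac{28}{17}$ in place of the $\nicefrac{5}{3}$ used in~\cite{TheNumber} rescales the leading constant $\nicefrac{5}{12}$ appearing in their final bound by the factor
$$\frac{28/17}{5/3} \;=\; \frac{84}{85},$$
producing $\tfrac{5}{12}\cdot \tfrac{84}{85} = \tfrac{7}{17}$, hence $f_k(1,1)<c_0^{(\nicefrac{7}{17}+\epsilon)2^{k-1}}$ for $k\geq k(\epsilon)$. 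The only real bookkeeping task is to check that every intermediate estimate in the Browning-Elsholtz iteration remains valid after the substitution; because $\nicefrac{28}{17}$ enters only as a multiplicative constant in the exponent, this is essentially automatic, and is the step where I would be most careful.

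For part (3), any ordered solution $(a_1,\ldots,a_k)$ of $1=\sum_{i=1}^k 1/a_i+1/\prod_i a_i$ yields a representation of $1$ as a sum of $k+1$ unit fractions by appending the denominator $\prod_i a_i$. Since at most $k!$ orderings of the first $k$ coordinates can give rise to the same sorted $(k+1)$-tuple, we get $S(k)\leq k!\,f_{k+1}(1,1)$. Substituting part (2) and absorbing the subexponential factor $k!=c_0^{o(2^k)}$ into $\epsilon$ yields $S(k)\leq c_0^{(\nicefrac{7}{17}+\epsilon)2^k}$, as required.
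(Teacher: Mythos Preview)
Your proposal is correct and matches the paper's proof almost exactly. Part~(1) is, as you say, just the specialisation $m=n=1$ of Theorem~\ref{thm: k fractions theorem}, and your arithmetic $\tfrac{4}{3}\cdot\tfrac{28}{17}\cdot 2^{k-5}=\tfrac{7}{51}\cdot 2^{k-1}$ is right. Part~(2) is handled in the paper precisely as you propose: one reruns the proof of~\cite{TheNumber}*{Theorem~4} and swaps in the exponent $\tfrac{28}{17}$ from Theorem~\ref{thm: k fractions theorem} at the final step.

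The only (minor) divergence is in part~(3). The paper simply records $S(k)\leq f_{k+1}(1,1)$ and applies part~(2). This clean inequality holds because, with the $a_i$ taken in nondecreasing order, the appended denominator $\prod_i a_i$ is automatically $\geq a_k$, so distinct ordered $k$-tuples produce distinct ordered $(k+1)$-tuples. Your extra factor $k!$ is therefore unnecessary, though harmless since it is absorbed into the $\epsilon$.
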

\begin{proof}
The first assertion is an immediate consequence of Theorem~\ref{thm: k fractions theorem}. For the proof of the second statement we refer the reader to the proof of Theorem 4 in~\cite{TheNumber}. The only change necessary is plugging in the bound from Theorem~\ref{thm: k fractions theorem} instead of~\cite{TheNumber}*{Theorem 3} for the last $5$ lines of the proof which amounts to just exchanging one exponent. The last statement follows from the first one and the observation that $S(k)\leq f_{k+1}(1,1)$.
\end{proof}

We note that the number of solutions of the equation $1=\sum_{1=1}^k\frac{1}{a_i}+\frac{1}{\prod_{i=1}^ka_i}$ has applications to problems considered in~\cite{OnTheDiophantine}.

Finally we deal with lower bounds. In~\cite{CountingThe}*{Theorem 1.8} it is shown that we have 
$$f_3(4,n)\geq \exp\left((\log 3+o(1))\frac{\log n}{\log \log n}\right)$$ 
for infinitely many $n \in \mathbb{N}$ and that 
$$f_3(4,n) \geq \exp\left(\left(\frac{\log 3}{2}+o(1)\right)\log \log n\right)$$ for all integers $n$ in a subset of the positive integers with density $1$. The following theorem gives an improvement of these bounds which also give a limitation on improving the upper bounds for the number of solution of the Erd\H{o}s-Straus equation and in the general case. For comparison we note that $\log 3=1.09861\ldots$, $\frac{ \log 3}{2}=0.54930\ldots$ and $\log 6=1.79175\ldots$.

\begin{theorem} \label{thm: lower bounds theorem}
For given $m \in \mathbb{N}$ there are infinitely many $n \in \mathbb{N}$ such that
$$f_3(m,n) \geq \exp\left((\log 6 +o_m(1))\frac{\log n}{\log \log n}\right).$$
Furthermore, for given $m \in \mathbb{N}$, there exists a subset $\mathcal{M}_1$ of the integers with density one, such that for any $n \in \mathcal{M}_1$
\begin{align*}
f_3(m,n)&\geq \left(\frac{1}{\varphi(m)}+o(1)\right)\exp\left((\log 3+o_m(1))\log \log n\right) \cdot \log \log n \\ 
&\gg (\log n)^{\log 3+o_m(1)}.
\end{align*}
For the special case $m=4$ and for integers $n$ in a set $\mathcal{M}_2\subset
\mathbb{N}$ with density one, the last bound may be improved to
$$f_3(4,n)\geq \exp\left((\log 6+o(1))\log \log n\right).$$
\end{theorem}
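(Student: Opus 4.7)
The plan is to fix the third denominator and reduce to counting two-fraction representations. If $\frac{m}{n}-\frac{1}{a_3}=\frac{m'}{n'}$ in lowest terms, the identity $(m'a_1-n')(m'a_2-n')=n'^2$ puts the solutions of $\frac{1}{a_1}+\frac{1}{a_2}=\frac{m'}{n'}$ into bijection with factorizations $n'^2=\alpha\beta$ satisfying $\alpha,\beta\equiv -n'\pmod{m'}$; the resulting count is essentially $\tfrac{1}{2}d(n'^2)$ up to an $O_m(1)$ loss from the residue restriction, since $m'\le m$. Hence every lower bound for $f_3(m,n)$ will follow from exhibiting many $a_3$ for which the reduced denominator $n'$ has many prime factors, and heuristically the total three-variable count is governed by $d_3(n^2)$ rather than $d(n^2)$ alone.

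For the first assertion I would take $r$ large and let $n$ be a suitable multiple of the primorial $P=\prod_{i\le r}p_i$, with $p_i$ the first $r$ primes coprime to $m$; in particular $\omega(n)\ge r$. One then constructs a family of $\sim 2^r$ values of $a_3$ parametrized by subsets $S\subseteq\{p_1,\dots,p_r\}$, roughly of the shape $a_3=\frac{n\prod_{p\in S}p}{c_S}$ with $c_S$ a small integer depending on $m$ and $S$, chosen so that after cancellation the reduced denominator $n'$ remains supported on essentially all $r$ primes (using a slightly enlarged pool of primes in $n$ to provide slack). For each such $a_3$ the two-fraction count yields $\gg 3^{r-o(r)}$ representations, and the $2^r$ choices of $S$ give distinct triples because one can read $S$ off from which of the $p_i$ divide $a_3$; multiplying, one obtains $\gg 6^{r(1-o(1))}$ solutions. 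Since $\log n\sim r\log r$ by the prime number theorem, $r\sim \log n/\log\log n$ and one recovers $\exp((\log 6+o_m(1))\log n/\log\log n)$.

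For the density one statement with general $m$, the plan is to invoke Hardy--Ramanujan: on a set $\mathcal{M}_1$ of density one, $\omega(n)=(1+o(1))\log\log n$. For any such $n$, a single well-chosen admissible $a_3$ leaves a denominator $n'$ with $\omega(n')\ge \omega(n)-O(1)$, and the divisor count gives $\gg 3^{\omega(n)}=(\log n)^{\log 3+o(1)}$ two-fraction solutions; running $a_3$ over a family of size $\sim \log\log n$ indexed by divisors of $m$ in admissible reduced residue classes produces both the $1/\varphi(m)$ factor and the $\log\log n$ multiplier. The stronger bound for $m=4$ on $\mathcal{M}_2$ exploits identities specific to the numerator $4$: each squarefree divisor of $n$ furnishes a genuinely distinct admissible $a_3$ (no such identity is available for arbitrary $m$), so the number of usable $a_3$ grows to $\sim 2^{\omega(n)}=(\log n)^{\log 2+o(1)}$, which combined with the $3^{\omega(n)}$ two-fraction solutions per $a_3$ yields the full $(\log n)^{\log 6+o(1)}$ bound.

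The main obstacle is the bookkeeping in the first assertion: one must design the $2^r$-family of $a_3$ so that the reduced $n'$ retains $\Omega(r)$ distinct prime factors for \emph{every} member $S$, the residue restriction $\alpha,\beta\equiv -n'\pmod{m'}$ remains satisfiable up to an $O_m(1)$ loss, and the different sorted triples $(a_1,a_2,a_3)$ are actually distinct. The identity $\log 6=\log 2+\log 3$ is essentially forced by the heuristic $f_3(m,n)\asymp d_3(n^2)/6\asymp 6^{\omega(n)}$ for squarefree $n$, and the construction succeeds precisely when the $2^r$ admissible choices of $a_3$ and the $3^r$ two-fraction solutions per $a_3$ can be realized simultaneously.
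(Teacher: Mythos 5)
Your overall architecture --- many admissible smallest denominators, each leaving a fraction whose denominator retains almost all the prime factors of $n$ and hence admits $\asymp 3^{\omega(n)}$ two-term decompositions, with the product $2^{\omega}\cdot 3^{\omega}=6^{\omega}$ --- is exactly the paper's. But the step you yourself flag as ``the main obstacle'' is the actual content of the proof, and your sketch of it does not work as stated. First, your claim that the two-fraction count for $\frac{m'}{n'}$ is ``essentially $\frac12 d(n'^2)$ up to an $O_m(1)$ loss from the residue restriction'' is false in general: the number of divisors of $n'^2$ in the class $-n'\bmod m'$ can be far smaller than $d(n'^2)/m'$, indeed it can be zero (e.g.\ if every prime factor of $n'$ is $\equiv 1\bmod m'$). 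The paper sidesteps this entirely in the first assertion by taking $n=mn'$ with $n'=p_1\cdots p_r$ a primorial, so the problem becomes $\frac{1}{n'}=\frac{1}{a_1}+\frac{1}{a_2}+\frac{1}{a_3}$, and then choosing the \emph{additive} shifts $a_1=n'+d$ for each of the $2^{\omega(n')}$ divisors $d\mid n'$: the remainder is the exact unit fraction $\frac{1}{n'(n'/d+1)}$, whose denominator is divisible by $n'$, so each of the $3^{\omega(n')}$ ordered coprime divisor pairs of $n'$ yields a decomposition with no congruence condition at all. Your multiplicative family $a_3=n\prod_{p\in S}p/c_S$ is never constructed, and it is not clear it can simultaneously satisfy integrality, retention of all $r$ primes in the reduced denominator, and solvability of the residue conditions; the additive shift is the missing idea.

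Second, for the density-one statements the congruence problem reappears and must be addressed rather than absorbed into an $O_m(1)$. For general $m$ the paper restricts to the $\left(\frac{1}{\varphi(m)}+o(1)\right)\log\log n$ \emph{prime} divisors $p$ of the reduced denominator $n'$ lying in the class $-n'\bmod m'$ (supplied by Tur\'an--Kubilius applied to $\omega(n,k,m)$), takes $a_1=(n'+p)/m'$, and again lands on an exact unit fraction; your ``family indexed by divisors of $m$'' has only $O_m(1)$ members and cannot produce the $\log\log n$ multiplier. For $m=4$ and $n$ odd, your assertion that ``each squarefree divisor of $n$ furnishes an admissible $a_3$'' is not true: one needs $d\equiv -n\bmod 4$, and guaranteeing that a positive proportion (at least $\tau(n)/4$) of the divisors lie in that class for density-one $n$ requires Hall's theorem on the distribution of divisors in residue classes, combined with $\tau(n)\geq(\log n)^{\log 2+o(1)}$ on a density-one set. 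These inputs are absent from your proposal, so as written the argument does not close.
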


\begin{remark}
Previous proofs of lower bounds of similar type as the ones in Theorem~\ref{thm: lower bounds theorem} constructed solutions from factorizations of $n$. We get our improvement from additionally taking into account factorizations of a lot of shifts of $n$. Hence our proof also shows that there are many values $a_1$ admitting many pairs $(a_2,a_3)$. Here `many' means $\exp\left((C+o_m(1))\frac{\log n}{\log \log n}\right)$, where the constant $C$ depends on which of the three lower bounds in Theorem~\ref{thm: lower bounds theorem} we consider. 
\end{remark}

We may ask if a lower bound on $f_3(m,n)$ of the first type in Theorem~\ref{thm: lower bounds theorem} does not only hold for infinitely many positive integers $n$ but also for infinitely many prime denominators $p$. In~\cite{CountingThe} there was no lower bound of this type, but it was proved that $f_3(4,p)\gg (\log p)^{0.549}$ for almost all primes. We note that this result implies, using Dirichlet's theorem on primes, the following corollary.

\begin{corollary}
For every reduced residue class $e \bmod f$, i.e. $\gcd(e,f)=1$, there are infinitely many primes $p$ such that $f_3(4,p) \gg (\log p)^{0.549}$, and $p \equiv e \bmod f$. 
\end{corollary}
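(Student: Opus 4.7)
The plan is to combine the ``almost all primes'' result of Elsholtz--Tao with Dirichlet's theorem on primes in arithmetic progressions. The statement ``$f_3(4,p)\gg (\log p)^{0.549}$ for almost all primes'' in~\cite{CountingThe} means that if we denote by $\mathcal{E}$ the set of primes $p$ for which the inequality fails, then $\mathcal{E}$ has relative density zero inside the set of all primes, i.e.\ $|\mathcal{E}\cap[1,x]|=o(\pi(x))$ as $x\to \infty$.

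Next, fix a reduced residue class $e \bmod f$ with $\gcd(e,f)=1$. By Dirichlet's theorem (in its quantitative form, the prime number theorem for arithmetic progressions), the number of primes $p\le x$ with $p\equiv e \bmod f$ is asymptotically $\frac{1}{\varphi(f)}\pi(x)$, which in particular is of exact order $\pi(x)$ and hence strictly larger than $|\mathcal{E}\cap[1,x]|$ for all sufficiently large $x$.

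Therefore the set of primes $p\equiv e \bmod f$ with $f_3(4,p)\gg (\log p)^{0.549}$, obtained by removing the (density-zero) exceptional set $\mathcal{E}$ from the arithmetic progression $\{p \text{ prime}: p\equiv e\bmod f\}$, is still infinite. In fact it still contains $(1/\varphi(f)+o(1))\pi(x)$ elements up to $x$, which is much more than what is required.

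The only subtlety is making sure the quantitative meaning of ``almost all'' in~\cite{CountingThe}*{Theorem 1.8 (or the surrounding discussion)} is the one just used; since the cited result gives a natural-density-zero exceptional set among the primes, the above counting argument goes through verbatim and there is no real obstacle. No new analytic input beyond Dirichlet's theorem is required.
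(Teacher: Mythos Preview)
Your proposal is correct and follows exactly the approach the paper indicates: the paper simply remarks (without a formal proof) that the corollary follows from the Elsholtz--Tao ``almost all primes'' result together with Dirichlet's theorem on primes in arithmetic progressions. Your write-up is a faithful expansion of that one-line argument, and your check on the meaning of ``almost all'' (relative density zero among primes) is the only point requiring care, which you handle correctly.
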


Here we improve this corollary considerably.

\begin{theorem} \label{thm: lower bound for primes theorem}
For every $m \in \mathbb{N}$ and every reduced residue class $e \bmod f$ there are infinitely many primes $p\equiv e \bmod f$ such that
$$f_3(m,p)\gg_{f,m} \exp\left(\left(\frac{5\log 2}{12 \lcm(m,f)}+o_{f,m}(1)\right)\frac{\log p}{\log \log p}\right).$$
Here $o_{f,m}(1)$ denotes a quantity depending on $f$ and $m$ which goes to zero as $p$ tends to infinity.
\end{theorem}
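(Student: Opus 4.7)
The plan is to combine a divisor-indexed construction of representations of $m/p$ with a Linnik-type theorem supplying primes $p\equiv e\pmod f$ whose shift $p+c$ has many small prime factors. Suppose $p=mT-c$ is prime, where $c$ is a small positive integer with $c\mid T$ and $\gcd(c,m)=1$. Then
$$\frac{m}{p}=\frac{1}{T}+\frac{c}{Tp}=\frac{1}{T}+\frac{1}{(T/c)p},$$
and for each positive divisor $d$ of $T^2$ the elementary identity $\frac{1}{T}=\frac{1}{T+d}+\frac{1}{T(T+d)/d}$ splits $\tfrac{1}{T}$ into two unit fractions (note $d\mid T^2$ implies $d\mid T(T+d)$). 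Up to the involution $d\leftrightarrow T^2/d$, which merely swaps the two summands, distinct $d$ give distinct triples, so at least $\tau(T^2)/2$ representations of $m/p$ arise this way.

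To accommodate the residue class, set $L=\lcm(m,f)$ and $g=\gcd(m,f)$. Since $\gcd(e,f)=1$ forces $\gcd(e,g)=1$, the Chinese remainder theorem supplies a $c$ bounded in terms of $m,f$ with $\gcd(c,mf)=1$ and $c\equiv -e\pmod g$, making the congruences $p\equiv -c\pmod m$ and $p\equiv e\pmod f$ jointly consistent. For a parameter $y$, set $Q=\prod_{\ell\le y,\;\ell\nmid cL}\ell$; then requiring $cQ\mid T$ translates into $p\equiv -c\pmod{mcQ}$, which, combined with $p\equiv e\pmod f$, cuts out a single reduced residue class modulo some $M\asymp_{m,f}QL$. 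Applying a Linnik-type estimate on primes in arithmetic progressions produces a prime $p$ in this class with $\log p\ll_{m,f}A(y+\log L)$ for the effective constant $A$ in use.

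For this $p$, the divisibility $cQ\mid T$ gives $\omega(T)\ge \pi(y)-O_{m,f}(1)\sim y/\log y$, whence $\tau(T^2)\ge 2^{\omega(Q)}$. Optimizing $y$ against the constraint from the previous step and invoking the sharpest effective Linnik-type bound yields
$$f_3(m,p)\;\gg_{m,f}\;\exp\!\left(\left(\frac{5\log 2}{12\,\lcm(m,f)}+o_{m,f}(1)\right)\frac{\log p}{\log\log p}\right).$$
The principal obstacle is the quantitative extraction of the constant $5/12$: the factor $\lcm(m,f)$ in the denominator traces to the contribution of $L$ inside $\log M$, while the exact numerical value $5/12$ requires invoking the current best unconditional Linnik-type result together with an optimal choice of $y$. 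A minor subsidiary point is the distinctness of the constructed triples, which follows from the involution argument above.
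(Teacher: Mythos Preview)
There is a genuine gap. Your construction requires $c\mid T$, but together with $p=mT-c$ this gives $p=c(m\cdot\tfrac{T}{c}-1)$, so $c\mid p$; since $p$ is a large prime and $c$ is bounded, this forces $c=1$. With $c=1$ you automatically have $p\equiv -1\pmod m$, so the simultaneous requirement $p\equiv e\pmod f$ is only consistent when $e\equiv -1\pmod{\gcd(m,f)}$. Thus as written the argument does not cover all reduced residue classes $e\bmod f$, contrary to the statement. Your attempt to gain flexibility via the choice of $c$ collapses precisely because the unit-fraction condition $\tfrac{c}{Tp}=\tfrac{1}{(T/c)p}$ pins $c$ down.

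A second problem concerns the constant. You assert that the factor $\lcm(m,f)$ in the denominator ``traces to the contribution of $L$ inside $\log M$'', but $\log M=\log Q+O_{m,f}(1)$, so $L$ only shifts $\log M$ by a bounded amount and cannot produce a factor $\lcm(m,f)$ in the leading constant. In fact, when your construction does apply (say $\gcd(m,f)=1$), your modulus is a primorial times a constant, hence smooth, and Chang's Linnik-type exponent $\tfrac{12}{5}$ for smooth moduli yields a constant $\tfrac{5\log 2}{12}$ \emph{without} the $\lcm(m,f)$; you should cite Chang explicitly rather than ``the sharpest effective Linnik-type bound'', since the general unconditional Linnik constant is $5$, not $\tfrac{12}{5}$. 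The paper's proof takes a different route: it works with the pattern $(1,p,p)$ and parametrizes solutions through relative greatest common divisors, fixing $x_{12}=\tfrac{\lcm(m,f)}{m}$ and choosing an auxiliary integer $k$ with $k\equiv -e\pmod f$ so that $p\equiv -k\pmod{\lcm(m,f)}$ lands in the prescribed class for \emph{every} $e$. The many solutions then come from subsets of an auxiliary set of primes in a fixed residue class mod $k$; it is this arithmetic-progression restriction (density $1/\varphi(k)$, bounded by $1/\lcm(m,f)$) that is responsible for the $\lcm(m,f)$ in the stated bound.
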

Using results of Harman~\cites{OnTheNumberOfCarmichael,WattsMean} one might be able to improve the factor $\frac{5}{12}$ in the exponent to $0.4736$.

\section{Notation}

As usual $\mathbb{N}$ denotes the set of positive integers and $\mathbb{P}$ the set of primes in $\mathbb{N}$. We denote the greatest common divisor and the least common multiple of $n$ elements $a_i \in \mathbb{N}$ by $\gcd(a_1,a_2, \ldots, a_n)$ and $\lcm(a_1,a_2, \ldots, a_n)$ or $(a_1, a_2, \ldots, a_n)$ and $[a_1, a_2, \ldots, a_n]$ for short. For integers $d,n \in \mathbb{N}$ we write $d|n$ if $d$ divides $n$. We use the symbols $\mathcal{O}$, $o$, $\ll$ and $\gg$ within the contexts of the well known Landau and Vinogradov notations where dependence of the implied constant on certain variables is indicated by a subscript. For any prime $p \in \mathbb{P}$ we define the function $\nu_p:\mathbb{N}\rightarrow \mathbb{N}\cup \{0\}$ to be the $p$-adic valuation, i.e. $\nu_p(n)=a$ if and only if $p^a$ is the highest power of $p$ dividing $n$. By $\tau(n)$ and $\omega(n)$, as usual, we denote the number of divisors and the number of distinct prime divisors of $n$. By $\tau(n,m)$, we denote the number of divisors of $n$ coprime to $m$ and $\tau(n,k,m)$, $\omega(n,k,m)$ denote the number of divisors (resp. distinct prime divisors) of $n$ in the residue class $k \bmod m$, where $(k,m)=1$. Finally, for two coprime integers $a$ and $b$ we denote by $\ord_a(b)$ the least positive integer $l$, such that $b^l\equiv 1 \bmod a$. 

\section{Heuristics on $f_k(m,n)$} \label{sec: heuristics}

We now informally discuss why $f_3(m,n)=\mathcal{O}_{\epsilon}(n^{\epsilon})$ can be expected. In fact, as far as we are aware, this was first observed by Roger Heath-Brown (private communication with the first author in 1994). Let us first recall (see e.g.~\cite{TheoryOf}*{p. 201: Theorem 3}) that a fraction $\frac{m}{n}$ with $\gcd(m,n)=1$ is a sum of two unit fractions $\frac{1}{a_1}+\frac{1}{a_2}$ if and only if there exist two distinct, positive and coprime divisors $d_1$ and $d_2$ of $n$ such that $d_1+d_2\equiv 0 \bmod m$. We may deduce an upper bound of $\mathcal{O}_{\epsilon}(n^{\epsilon})$ for the number of representations of $\frac{m}{n}$ as a sum of two unit fractions. Indeed from 
\begin{equation} \label{eq: heuristics two fractions equation}
\frac{m}{n}=\frac{1}{a_1}+\frac{1}{a_2},
\end{equation} 
by setting $d=(a_1,a_2)$ and $a_i^{\prime}=\frac{a_i}{d}$ for $i\in\{1,2\}$, we see that
$$ma_1^{\prime}a_2^{\prime}d=n(a_1^{\prime}+a_2^{\prime}).$$
This implies that $a_1^{\prime},a_2^{\prime}$ are divisors of $n$, $d$ divides $n(a_1^{\prime}+a_2^{\prime})<2n^2$ and any solution $(a_1,a_2)$ of \eqref{eq: heuristics two fractions equation} uniquely corresponds to a triple $(a_1^{\prime},a_2^{\prime},d)$. The number $\sum_{a_1^{\prime},a_2^{\prime}|n}\tau(n(a_1^{\prime}+a_2^{\prime}))$ of such triples is bounded by $\mathcal{O}_{\epsilon}(n^{\epsilon})$ (see Lemma~\ref{lem: divisor bound} below).
 
Studying $\frac{m}{n}=\frac{1}{a_1}+\frac{1}{a_2}+\frac{1}{a_3}$ with $a_1\leq a_2 \leq a_3$ one observes that
$$\frac{1}{a_1} <\frac{m}{n}\leq \frac{3}{a_1}$$
from which $\frac{n}{m}<a_1\leq \frac{3n}{m}$ follows. In view of
\begin{equation} \label{eq: heuristic sum of three unit fractions}
\frac{m}{n}-\frac{1}{a_1}=\frac{ma_1-n}{na_1}=\frac{1}{a_2}+\frac{1}{a_3}
\end{equation}
there are at most $\mathcal{O}\left(\frac{n}{m}\right)$ choices for $a_1$, and for given $a_1$ there are at most $d(na_1)=\mathcal{O}_{\epsilon}(n^{\epsilon})$ divisors of $na_1$. This shows that $f_3(m,n)=\mathcal{O}_{\epsilon}\left(\frac{n^{1+\epsilon}}{m}\right)$ is a trivial upper bound. The real question is for how many values of $a_1$ there can be at least one solution. For increasing $a_1$, even if $na_1$ contains many divisors, the congruence $d_1+d_2 \equiv 0 \bmod ma_1-n$ should become, on average, more difficult to satisfy if $ma_1-n\gg n^{\epsilon}$. Therefore we expect that the number of $a_1$ contributing at least one solution is $\mathcal{O}_{\epsilon}(n^{\epsilon})$, so that $f_3(m,n)=\mathcal{O}_{\epsilon}(n^{2\epsilon})$. Moreover equation~\eqref{eq: heuristic sum of three unit fractions} implies that for any given $a_1$, the number of solutions is about $\tilde{d}(m,n,a_1)$. Here $\tilde{d}(m,n,a_1)$ counts the number of pairs of coprime divisors $d_1,d_2$ of $na_1$, with $d_1+d_2\equiv 0 \bmod ma_1-n$. Therefore $f_3(m,n)$ should be approximately $\sum_{a_1}\tilde{d}(m,n,a_1)$.

Similarly a completely trivial upper bound on $f_4(m,n)$ is as follows. With $a_1\leq a_2\leq a_3\leq a_4$ it follows that $\frac{n}{m}<a_1\leq \frac{4n}{m}$ and hence
$$\frac{ma_1-n}{na_1}=\frac{m}{n}-\frac{1}{a_1}=\frac{1}{a_2}+\frac{1}{a_3}+\frac{1}{a_4}\leq \frac{3}{a_2}.$$
From those bounds we easily deduce that $a_2 \leq \frac{12n^2}{m}$. With 
$$\frac{m}{n}-\frac{1}{a_1}-\frac{1}{a_2}=\frac{ma_1a_2-na_2-na_1}{na_1a_2}=\frac{1}{a_3}+\frac{1}{a_3},$$
with similar arguments as above, we deduce that $f_4(m,n)=\mathcal{O}_{\epsilon}\left(\frac{n^{3+\epsilon}}{m^2}\right)$. For fixed $m$ the fact that our bound on $f_4(m,n)$ in Theorem~\ref{thm: k fractions theorem} below is better than $\mathcal{O}(n^2)$ shows that, for most pairs $(a_1,a_2)$ and moreover, for most choices of $a_2 \in \left[\frac{n}{m},\frac{12n^2}{m}\right]$ there is no solution of $\frac{m}{n}=\frac{1}{a_1}+\frac{1}{a_2}+\frac{1}{a_3}+\frac{1}{a_4}$. Here again, as soon as $ma_1a_2-na_2-na_1 \gg n^{\epsilon}$ one should not expect to have two divisors $d_1,d_2$ of $na_1a_2$ such that $d_1+d_2\equiv 0 \bmod ma_1a_2-na_2-na_1$. From this reasoning, also $f_k(m,n)=\mathcal{O}_{\epsilon,k}(n^{\epsilon})$, for $k \geq 4$ seems to us a reasonable expectation.

The papers~\cite{TheNumber} and~\cite{CountingThe} studied parametric solutions of the diophantine equation~\eqref{eq: 3 fractions main equation}. The reason why the result in~\cite{CountingThe} is superior in the case of $n$ being a prime is that here a full parametric solution (e.g.~\cite{SullEquazione}) is much easier to work with. However, in this manuscript we develop parametric solutions of~\eqref{eq: 3 fractions main equation} and~\eqref{eq: k fractions main equation} from scratch. Some simplified version of this has been used in~\cite{SumsOfArticle} and~\cite{CountingThe}*{Section 11}, but there the focus was to generate solutions with many parameters. Here we need to do kind of the opposite, namely to show that \textit{every} solution comes from a number of parametric families. 

The method we introduce should theoretically work for any diophantine equation as it expresses a $k$-tuple of integers in a standard form. In practice it might work favorably if there is some inhomogeneous part as in
$$n=a_1a_2a_3-a_1-a_2.$$
For prime values of $n$ in equation~\eqref{eq: 3 fractions main equation} there are several discussions of parametric solutions in the literature, e.g. by Rosati~\cite{SullEquazione} and Aigner~\cite{BruecheAls}, see also Mordell's book~\cite{DiophantineEquations}*{Chapter 30}. For composite values $n$ there is no satisfactory treatment in the literature, and Section~\ref{sec: 3 unit fractions} below may be the most detailed study to date.

\section{Patterns and relative greatest common divisors} \label{sec: patterns and rgcds}

Consider a solution $(a_1,a_2, \ldots, a_k) \in \mathbb{N}^k$ with $a_1 \leq a_2 \leq \ldots \leq a_k$ of equation \eqref{eq: k fractions main equation} and set $n_i=(a_i,n)$, $a_i=n_it_i$ for $i \in \{1,2, \ldots, k\}$. We can thus rewrite equation \eqref{eq: k fractions main equation} as
\begin{equation} \label{eq: k fractions pattern equation}
\frac{m}{n}=\sum_{i=1}^k{\frac{1}{n_it_i}}.
\end{equation}
Later, when working on upper bounds for the number of solutions of equation \eqref{eq: k fractions pattern equation} for $k \in \{3,4\}$, we will fix a choice of $(n_1,n_2, \ldots, n_k) \in \mathbb{N}^k$. For given $m,n \in \mathbb{N}$ we call such a choice the \emph{pattern} of a solution of this equation. Note that for solutions corresponding to a given pattern $(n_1, n_2, \ldots, n_k)$ we have that $\left(\frac{n}{n_i},t_i\right)=1$ for all $i \in \{1,2, \ldots, k\}$. As $n_i|n$ the number of distinct patterns is $\mathcal{O}_k(n^{\epsilon})$ only.

Also, when dealing with equations of type \eqref{eq: k fractions pattern equation} for $k \in \{3,4\}$ we will make heavy use of the concept of relative greatest common divisors as described by Elsholtz in~\cite{SumsOf} (for some ad hoc definition see also~\cite{SumsOfArticle}). Relative greatest common divisors are a useful tool when studying divisibility relations among the $t_i$ in \eqref{eq: k fractions pattern equation}.

Let $I=\{1,2, \ldots, k\}$ be the index set. Then we define the relative greatest common divisors of the positive integers $t_1,t_2, \ldots, t_k$ recursively as follows:
$$x_I=\gcd(t_1,t_2, \ldots, t_k)$$
and for any $\{i_1,i_2,\ldots i_{|J|}\}=J \subseteq I$, $J \neq \emptyset$ we set
$$x_J=\frac{\gcd(t_{i_1},t_{i_2}, \ldots, t_{i_{|J|}})}{\prod_{\substack{J^{\prime} \subseteq I \\ J \subsetneq J^{\prime}}}x_{J^{\prime}}}.$$
For $k \in \{3,4\}$ we will later identify the elements $x_J$ with $J \subseteq I$ with the elements $x_i, x_{ij}$ and $x_{ijk}$ where $\{i,j,k\}=\{1,2,3\}$ in the case when $k=3$ and with the elements $x_{i}, x_{ij},x_{ijk}$ and $x_{ijkl}$ with $\{i,j,k,l\}=\{1,2,3,4\}$ when $k=4$. With the relative greatest common divisors defined as above we have that
$$t_i=\prod_{\substack{J \subseteq I \\ i\in J}}x_J.$$

A further very useful property of relative greatest common divisors is that $(x_J,x_K)=1$ if $J \nsubseteq K$ and $K \nsubseteq J$. We prove this property as the following lemma (see also~\cite{SumsOf}*{p. 2}).
\begin{lemma} \label{lem: rgcd lemma}
Let $t_1, t_2, \ldots, t_k \in \mathbb{N}$, $J,K \subseteq \{1,2, \ldots, k\}$, $J,K \neq \emptyset$ and define the corresponding relative greatest common divisors $x_J$ and $x_K$ as above. If $J \nsubseteq K$ and $K \nsubseteq J$ then $(x_J,x_K)=1$.
\end{lemma}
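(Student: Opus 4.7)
My plan is to check the statement one prime at a time. Fix a prime $p$, set $e_i = \nu_p(t_i)$ for $i \in I$, and, after relabeling indices (which merely permutes the $x_J$ consistently), assume $e_1 \leq e_2 \leq \cdots \leq e_k$. Taking $\nu_p$ in the identity $\prod_{J \subseteq J' \subseteq I} x_{J'} = \gcd(t_i : i \in J)$ yields, for every nonempty $J \subseteq I$, the linear system
$$\sum_{J \subseteq J' \subseteq I} \nu_p(x_{J'}) = \min_{i \in J} e_i = e_{\min J},$$
which has a unique solution by M\"obius inversion on the Boolean lattice of subsets of $I$.

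Next I would guess and verify the closed form of that solution. Introduce the tails $T_l := \{l, l+1, \ldots, k\}$ for $l \in I$ and the increments $y_l := e_l - e_{l-1}$ with $e_0 := 0$, so $y_l \geq 0$. I claim $\nu_p(x_{T_l}) = y_l$ for every $l$, while $\nu_p(x_J) = 0$ for every $J$ that is not a tail. By uniqueness it is enough to check that these values satisfy the above system, and this is a short calculation: since $J \subseteq T_l$ is equivalent to $l \leq \min J$, the left-hand side collapses to $\sum_{l=1}^{\min J} y_l = e_{\min J}$.

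The conclusion is then immediate. For each prime $p$, the sets $J$ with $\nu_p(x_J) > 0$ form a subcollection of the chain $T_1 \supsetneq T_2 \supsetneq \cdots \supsetneq T_k$, hence are pairwise comparable under inclusion. So if $J$ and $K$ are incomparable, then for every prime $p$ at least one of $\nu_p(x_J)$, $\nu_p(x_K)$ vanishes, and therefore $(x_J,x_K) = 1$.

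The only step requiring real thought is producing the closed form for $\nu_p(x_J)$; everything else is bookkeeping. A labeling-free way to state the same key fact is that, for fixed $p$, $\nu_p(x_J) > 0$ forces $J$ to be an upper level set $\{\, i \in I : \nu_p(t_i) \geq \alpha \,\}$ for some threshold $\alpha$, and such level sets are totally ordered under inclusion.
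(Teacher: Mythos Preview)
Your proof is correct and follows a genuinely different route from the paper's. The paper argues by contradiction: assuming a prime $p$ divides both $x_J$ and $x_K$, it sets $L=J\cup K$, rewrites $x_J$ and $x_K$ so that the factor $\gcd(t_i:i\in L)$ appears explicitly in the denominator, and then observes that for whichever of $J,K$ realises the smaller value of $\nu_p(\gcd(t_i:i\in\cdot))$ the numerator and the $\gcd(t_i:i\in L)$ part of the denominator carry the same power of $p$, forcing $\nu_p$ of that $x$ to be nonpositive.

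Your argument instead solves the whole system $\sum_{J\subseteq J'}\nu_p(x_{J'})=\min_{i\in J}\nu_p(t_i)$ in one stroke by guessing the closed form (supported on the tails after sorting, with values the successive increments) and verifying it, then reading off that the sets with $\nu_p(x_J)>0$ lie in a single chain. This is more structural and yields extra information for free: it shows directly that every $x_J$ is a positive integer (since all $\nu_p(x_J)\ge 0$), and it identifies exactly which $J$ can carry a given prime---namely the upper level sets of $i\mapsto\nu_p(t_i)$. The paper's argument is shorter if one only wants the coprimality statement and is willing to take $x_{J'}\in\mathbb{N}$ for granted, whereas yours is self-contained and explains \emph{why} the coprimality holds. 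One small point worth making explicit in your write-up is that the sorting permutation depends on $p$; you address this in the final labeling-free remark, but it would be cleaner to flag it when the relabeling is first introduced.
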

\begin{proof}
By assumption $J \nsubseteq K$ and $K \nsubseteq J$ and thus we have that $J \subsetneq J \cup K$ and $K \subsetneq J \cup K$. We suppose that $d=(x_J,x_K)>1$ and choose an arbitrary prime divisor $p|d$. Set $L=J \cup K$, $J=\{j_1,j_2, \ldots, j_{|J|}\}$, $K=\{k_1,k_2, \ldots, k_{|K|}\}$, $L=\{l_1,l_2, \ldots, l_{|L|}\}$ and write
\begin{align*}
x_J&=\frac{(t_{j_1},t_{j_2}, \ldots, t_{j_{|J|}})}{\left(\prod_{\substack{J^{\prime}\subseteq{I} \\ J \subsetneq J^{\prime} \\ L \nsubseteq J^{\prime}}}x_{J^{\prime}}\right)\cdot x_L \cdot \left(\prod_{\substack{J^{\prime} \subseteq I \\ L \subsetneq J^{\prime}}}x_{J^{\prime}}\right)} \text{, } \\
x_K&=\frac{(t_{k_1},t_{k_2}, \ldots, t_{k_{|K|}})}{\left(\prod_{\substack{K^{\prime}\subseteq{I} \\ K \subsetneq K^{\prime} \\ L \nsubseteq K^{\prime}}}x_{K^{\prime}}\right)\cdot x_L \cdot \left(\prod_{\substack{K^{\prime} \subseteq I \\ L \subsetneq K^{\prime}}}x_{K^{\prime}}\right)}.
\end{align*}
With $x_L=\frac{(t_{l_1},t_{l_2}, \ldots, t_{l_{|L|}})}{\prod_{\substack{L^{\prime}\subseteq I \\ L \subsetneq L^{\prime}}}x_{L^{\prime}}}$ this simplifies to
\begin{equation} \label{eq: rgcd lemma equation}
x_J=\frac{(t_{j_1},t_{j_2}, \ldots, t_{j_{|J|}})}{\left(\prod_{\substack{J^{\prime}\subseteq{I} \\ J \subsetneq J^{\prime} \\ L \nsubseteq J^{\prime}}}x_{J^{\prime}}\right)\cdot (t_{l_1},t_{l_2}, \ldots, t_{l_{|L|}})} \text{, } x_K=\frac{(t_{k_1},t_{k_2}, \ldots, t_{k_{|K|}})}{\left(\prod_{\substack{K^{\prime}\subseteq{I} \\ K \subsetneq K^{\prime} \\ L \nsubseteq K^{\prime}}}x_{K^{\prime}}\right)\cdot (t_{l_1},t_{l_2}, \ldots, t_{l_{|L|}})}.
\end{equation}
Let $p^{\alpha}$ be the highest power of $p$ dividing the greatest common divisor of the terms $(t_{j_1},t_{j_2}, \ldots, t_{j_{|J|}})$ and $(t_{k_1}, t_{k_1}, \ldots, t_{k_{|K|}})$. Thus $p^{\alpha}$ is also the highest power of $p$ such that
$$p^{\alpha} | ((t_{j_1},t_{j_2}, \ldots, t_{j_{|J|}}),(t_{k_1}, t_{k_1}, \ldots, t_{k_{|K|}}))=(t_{l_1},t_{l_2}, \ldots, t_{l_{|L|}}).$$
By definition of the greatest common divisor, without loss of generality we may suppose that $\nu_p((t_{j_1},t_{j_2}, \ldots, t_{j_{|J|}}))=\alpha$. From equation \eqref{eq: rgcd lemma equation} we finally see that $\nu_p(x_J)=0$, a contradiction to $p|d$.
\end{proof}

Relative greatest common divisors may be nicely visualized via Venn diagrams (especially when $k \leq 3$). We identify a positive integers with the multiset of its prime divisors, i.e. each prime $p$ dividing $n$ occurs with multiplicity $\nu_p(n)$ in the multiset. Given the Venn diagram of the multisets corresponding to the integers $t_1, \ldots ,t_k$, each area of intersection in the diagram uniquely corresponds to a relative greatest common divisor $x_J$, $J \subseteq \{1,\ldots,k\}$. Figure~\ref{fig: Venn Diagram} shows the situation for relative greatest common divisors of three positive integers $t_1,t_2$ and $t_3$.
\begin{figure}[ht]
\includegraphics[scale=10]{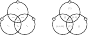}
\caption{A visualization of relative greatest common divisors using Venn diagrams. On the left hand side one sees the general case of three positive integers $t_1,t_2$ and $t_3$ and on the right hand side the situation when $t_1=90$, $t_2=126$ and $t_3=616$. Empty sets correspond to empty products and we set the corresponding relative greatest common divisor to $1$.}
\label{fig: Venn Diagram}
\end{figure} 

As mentioned in the beginning of this section relative greatest common divisors were systematically described in~\cite{SumsOf}. Nonetheless concepts of a similar type date back at least as far as Dedekind~\cite{UeberZerlegungen} who called the relative greatest common divisors of the integers $t_1, \ldots, t_k$ the cores (Kerne) of the system $(t_1, \ldots, t_k)$. Dedekind described the construction of these cores explicitly for systems with three and four elements and developed some theory to describe the cores of systems with more than four elements.

Decompositions similar to relative greatest common divisors also occur when we look for generalizations of the formula
\begin{equation} \label{eq: lcm gcd equation}
[t_1,t_2]=\frac{t_1t_2}{(t_1,t_2)},
\end{equation}
where $[t_1,t_2]$ denotes the least common multiple of the integers $t_1$ and $t_2$. A generalization of formula~\eqref{eq: lcm gcd equation} to least common multiples and greatest common divisors of $k$ integers $t_1, \ldots ,t_k$ was found by V.-A. Lebesgue~\cite{ExtraitDes}*{p. 350}, who proved that
$$[t_1,t_2, \ldots, t_k]=\frac{\prod_{\substack{1 \leq i \leq k \\ i \text { odd}}}G_i}{\prod_{\substack{1 \leq j \leq k \\ j \text { even}}}G_j},$$
where the variables $G_i$ denote the product of the greatest common divisors of all choices of subsets of $i$ integers in the set $\{t_1, t_2, \ldots, t_k\}.$

\section{Sums of three unit fractions} \label{sec: 3 unit fractions}

In this section we deal with equation \eqref{eq: k fractions pattern equation} for $k=3$, i.e. with equations of the form
\begin{equation} \label{eq: general Erdoes Straus equation}
\frac{m}{n}=\frac{1}{n_1t_1}+\frac{1}{n_2t_2}+\frac{1}{n_3t_3},
\end{equation}
where $n_1t_1\leq n_2t_2\leq n_3t_3$, $n_i|n$ and $\left(\frac{n}{n_i},t_i\right)=1$ for $i \in \{1, 2, 3\}$. In the following we use the concept of relative greatest common divisors introduced in the previous section to get a suitable parametrisation of the solutions of \eqref{eq: general Erdoes Straus equation} corresponding to a fixed pattern $(n_1,n_2,n_3) \in \mathbb{N}^3$.  

Writing the variables $t_i$ in terms of relative greatest common divisors, equation \eqref{eq: general Erdoes Straus equation} takes the form
\begin{equation} \label{eq: xi elimination equation before multiplying out}
\frac{m}{n}=\frac{1}{n_1x_1x_{12}x_{13}x_{123}}+\frac{1}{n_2x_2x_{12}x_{23}x_{123}}+\frac{1}{n_3x_3x_{13}x_{23}x_{123}}
\end{equation}
and multiplying out yields
\begin{equation} \label{eq: xi elimination equation}
mx_1x_2x_3x_{12}x_{13}x_{23}x_{123}=\frac{n}{n_1}x_2x_3x_{23}+\frac{n}{n_2}x_1x_3x_{13}+\frac{n}{n_3}x_1x_2x_{12}.
\end{equation}
A first thing we observe is that we have $x_i=1$ for all $i \in \{1,2,3\}$. This follows from Lemma~\ref{lem: rgcd lemma} and equation \eqref{eq: xi elimination equation} together with the fact that $x_i|\frac{n}{n_i}$ is possible only if $x_i=1$ by definition of $n_i$. We thus can work with the following simplified version of equation \eqref{eq: xi elimination equation}
\begin{equation} \label{eq: final multiplied out equation}
mx_{12}x_{13}x_{23}x_{123}=\frac{n}{n_1}x_{23}+\frac{n}{n_2}x_{13}+\frac{n}{n_3}x_{12}.
\end{equation}
Next we introduce the parameters $d_{ij}$ which are defined as $d_{ij}=\left(\frac{n}{n_i},\frac{n}{n_j}\right)$. Again we have that $(x_{ij},d_{ij})=1$ by definition of the $n_i$ and we note that for given $m,n$ and a fixed pattern $(n_1,n_2,n_3)$ also the parameters $d_{ij}$ are fixed.

In what follows we apply methods developed by Elsholtz and Tao~\cite{CountingThe}*{Sections 2 and 3}. The strategy is to derive a system of equations from \eqref{eq: final multiplied out equation} and to make use of divisor relations therein. With the observation of coprimality of $d_{ij}$ and $x_{ij}$, and using divisibility relations implied by equation \eqref{eq: final multiplied out equation} we may define the following three positive integers
$$w=\frac{\frac{n}{n_1d_{13}}x_{23}+\frac{n}{n_3d_{13}}x_{12}}{x_{13}} \text{, }y=\frac{\frac{n}{n_1d_{12}}x_{23}+\frac{n}{n_2d_{12}}x_{13}}{x_{12}} \text{ and } z=\frac{\frac{n}{n_2d_{23}}x_{13}+\frac{n}{n_3d_{23}}x_{12}}{x_{23}}.$$
Later we make use of the product of $w$ and $z$ which is given by
\begin{align*}
wz&=\frac{n}{n_1d_{13}}\frac{n}{n_2d_{23}}+\frac{x_{12}}{x_{13}x_{23}}\left(\frac{n^2}{n_1n_3d_{13}d_{23}}x_{23}+\frac{n^2}{n_2n_3d_{13}d_{23}}x_{13}+\frac{n^2}{n_3^2d_{13}d_{23}}x_{12}\right) \\
&=\frac{n}{n_1d_{13}}\frac{n}{n_2d_{23}} + \frac{nx_{12}}{n_3d_{13}d_{23}x_{13}x_{23}}\left(\frac{n}{n_1}x_{23}+\frac{n}{n_2}x_{13}+\frac{n}{n_3}x_{12}\right) \\
&=\frac{n}{n_1d_{13}}\frac{n}{n_2d_{23}} + \frac{nm}{n_3d_{13}d_{23}}x_{12}^2x_{123},
\end{align*}
where we used equation \eqref{eq: final multiplied out equation} to get the last equality. We collect the equations just derived in the following list
\begin{align}
mx_{12}x_{13}x_{23}x_{123}&=\frac{n}{n_1}x_{23}+\frac{n}{n_2}x_{13}+\frac{n}{n_3}x_{12} \label{eq: stystem equation 1} \\
yx_{12}&=\frac{n}{n_1d_{12}}x_{23}+\frac{n}{n_2d_{12}}x_{13} \label{eq: system equation 2} \\
zx_{23}&=\frac{n}{n_2d_{23}}x_{13}+\frac{n}{n_3d_{23}}x_{12} \label{eq: system equation 3}\\
mx_{13}x_{23}x_{123}&=d_{12}y+\frac{n}{n_3} \label{eq: system equation 4}\\
mx_{12}x_{13}x_{123}&=d_{23}z+\frac{n}{n_1} \label{eq: system equation 5}\\
wz&=\frac{n}{n_1d_{13}}\frac{n}{n_2d_{23}} + \frac{nm}{n_3d_{13}d_{23}}x_{12}^2x_{123}. \label{eq: system equation 6}
\end{align}
For proving Theorem~\ref{thm: main theorem} the classical divisor bound will play a crucial role. We will use it in the following form (see \cite{AnIntroduction}*{Theorem 315}).

\begin{customlem}A \label{lem: divisor bound}
Let $d(n):\mathbb{N}\rightarrow \mathbb{N}$ be the divisor function, i.e. $d(n)=\sum_{d|n}1$. Then for every $\epsilon >0$ we have
$$d(n)\ll_{\epsilon} n^{\epsilon}.$$
\end{customlem}
We now have all the tools we need to prove Theorem~\ref{thm: main theorem}.

\begin{proof}[Proof of Theorem~\ref{thm: main theorem}]
Consider a solution of equation \eqref{eq: general Erdoes Straus equation} for a fixed pattern $(n_1,n_2,n_3)$. By assumption we have $n_1t_1 \leq n_2t_2 \leq n_3t_3$ and using the parametrization of the $t_i$ we introduced in equation \eqref{eq: xi elimination equation before multiplying out} this implies
$$x_{13} \leq \frac{n_2}{n_1}x_{23} \text{ and } x_{12} \leq \frac{n_3}{n_2}x_{13}.$$
Using these inequalities in equations \eqref{eq: system equation 2} and \eqref{eq: system equation 3} yields
$$yx_{12} \leq 2\frac{n}{n_1d_{12}}x_{23} \text{ and } zx_{23} \leq 2\frac{n}{n_2d_{23}}x_{13}.$$
Dividing by $x_{23}$ and $x_{13}$ respectively and multiplying the last two inequalities we arrive at
$$\frac{yx_{12}}{x_{23}}\frac{zx_{23}}{x_{13}} \leq 4\frac{n^2}{n_1n_2d_{12}d_{23}}.$$
We now intend to obtain a lower bound for $n_1n_2d_{12}d_{23}$. Let $n=\prod_{p \in \mathbb{P}}p^{\nu_p(n)}$ be the prime factorization of $n$. Then $n_1=\prod_{p \in \mathbb{P}}p^{\nu_p(n_1)}$ and $n_2=\prod_{p \in \mathbb{P}}p^{\nu_p(n_2)}$ where $0 \leq \nu_p(n_1),\nu_p(n_2) \leq \nu_p(n)$ for all $p \in \mathbb{P}$. Since 
$$d_{12} =\left(\frac{n}{n_1},\frac{n}{n_2}\right) =\prod_{p \in \mathbb{P}}p^{\nu_p(n)-\max(\nu_p(n_1),\nu_p(n_2))}$$ 
we have
\begin{align*}
n_1n_2d_{12} &= \prod_{p \in \mathbb{P}}p^{\nu_p(n_1)+\nu_p(n_2)+\nu_p(n)-\max(\nu_p(n_1),\nu_p(n_2))} \\
&\geq \prod_{p \in \mathbb{P}}p^{\nu_p(n_1)+\nu_p(n_2)+\nu_p(n)-\nu_p(n_1)-\nu_p(n_2)}=n.
\end{align*}
This shows that $n_1n_2d_{12}d_{23} \geq n$ and thus
$$\frac{yx_{12}}{x_{23}}\frac{zx_{23}}{x_{13}} \ll n.$$
By assumption we have that $n_1t_1$ is the smallest denominator in equation \eqref{eq: general Erdoes Straus equation}. This implies that 
$$\frac{m}{n} \leq \frac{3}{n_1t_1} \text{ and thus } t_1 \leq \frac{3n}{mn_1} \ll \frac{n}{m}.$$
The bound in Theorem~\ref{thm: main theorem} can finally be derived from the following inequality
\begin{equation} \label{eq: 1 5 equation}
y \cdot z \cdot x_{12}x_{13} \cdot (x_{12}x_{123})^2=\frac{yx_{12}}{x_{23}}\frac{zx_{23}}{x_{13}}(x_{12}x_{13}x_{123})^2 \ll \frac{n^3}{m^2}.
\end{equation}
This implies that at least one of the factors $y$, $z$, $x_{12}x_{13}$ and $x_{12}x_{123}$ is bounded by $\mathcal{O}\left(\left(\frac{n^3}{m^2}\right)^{\nicefrac{1}{5}}\right)$.

If this is the case for $y$ then by Lemma~\ref{lem: divisor bound} and equation \eqref{eq: system equation 4} we have at most $\mathcal{O}_{\epsilon}(n^{\epsilon})$ choices for the parameters $x_{13}$, $x_{23}$ and $x_{123}$ for every choice of $y$. The parameter $x_{12}$ is then uniquely determined by \eqref{eq: stystem equation 1}.

Similarly, if $z$ is the bounded parameter use Lemma~\ref{lem: divisor bound} and equation \eqref{eq: system equation 5} to see that there are at most $\mathcal{O}_{\epsilon}(n^{\epsilon})$ choices for the parameters $x_{12}$, $x_{13}$ and $x_{123}$ for every choice of $z$. Again the remaining parameter $x_{23}$ is uniquely determined by \eqref{eq: stystem equation 1}.

Suppose that $x_{12}x_{13} \ll \left(\frac{n^3}{m^2}\right)^{\nicefrac{1}{5}}$. By Lemma~\ref{lem: divisor bound} for every fixed choice of $x_{12}x_{13}$ we may choose the factors $x_{12}$ and $x_{13}$ in at most $\mathcal{O}_{\epsilon}(n^{\epsilon})$ ways. For each of those choices Lemma~\ref{lem: divisor bound} and equation \eqref{eq: system equation 3} imply that there are at most $\mathcal{O}_{\epsilon}(n^{\epsilon})$ choices for the parameter $x_{23}$. As before the remaining parameter $x_{123}$ is then fixed by \eqref{eq: stystem equation 1}.

Finally we need to consider the case when $x_{12}x_{123}$ is the bounded factor. As in the previous case for any fixed choice of $x_{12}x_{123}$ we have at most $\mathcal{O}_{\epsilon}(n^{\epsilon})$ choices for the factors $x_{12}$ and $x_{123}$. Since equation \eqref{eq: general Erdoes Straus equation} has no solutions for $m > 3n$ we have that $m \ll n$ and using equation \eqref{eq: system equation 6} we see that for any fixed choice of $x_{12}$ and $x_{123}$ we have at most $\mathcal{O}_{\epsilon}(n^{\epsilon})$ choices for the parameters $w$ and $z$. With $z$, $x_{12}$ and $x_{123}$ fixed, $x_{13}$ is uniquely determined by \eqref{eq: system equation 5}. The last parameter $x_{23}$ is again uniquely determined by \eqref{eq: stystem equation 1}.

In any case we have a bounded number of applications of the divisor bound from Lemma~\ref{lem: divisor bound}, say it was applied at most $l$ times. Setting $\tilde{\epsilon}=l\epsilon$ we hence have at most $\mathcal{O}_{\tilde{\epsilon}}\left(n^{\tilde{\epsilon}}\left(\frac{n^3}{m^2}\right)^{\nicefrac{1}{5}}\right)$ choices for the parameters $x_{12}$, $x_{13}$, $x_{23}$ and $x_{123}$ which uniquely determine a solution of \eqref{eq: general Erdoes Straus equation} if $n_1$, $n_2$ and $n_3$ are fixed. Note that this bound is independent of the concrete choice of the parameters $n_i$ and again by Lemma~\ref{lem: divisor bound} we have at most $\mathcal{O}_{\epsilon}(n^{3\epsilon})$ choices for the pattern $(n_1,n_2,n_3)$. Theorem~\ref{thm: main theorem} now follows by redefining the choice of $\epsilon$. 
\end{proof}

Finally we prove Corollary~\ref{cor: algorithmic aspects}.

\begin{proof}[Proof of Corollary~\ref{cor: algorithmic aspects}.]
The proof of Theorem~\ref{thm: main theorem} suggests an algorithm for computing all decompositions of a rational number $\frac{m}{n}$ as a sum of three unit fractions. The running time of this algorithm depends on the quality of algorithms used for integer factorization. In~\cite{ARigorous} a probabilistic algorithm is analyzed which finds all prime factors of a given integer in expected running time $\exp((1+o(1))\sqrt{\log n \log \log n})$ for $n \rightarrow \infty$, which is clearly $\mathcal{O}_{\epsilon}(n^{\epsilon})$. Here the term probabilistic means that the algorithm is allowed to call a random number generator which outputs $0$ or $1$ each with probability $\frac{1}{2}$. The term expected running time refers to averaging over the output of the random number generator only and not over the input $n$. Hence the expected running time is also valid for each individual $n$.

As a consequence, using an algorithm of this type, all decompositions of $\frac{m}{n}$ as a sum of three unit fractions can be found by carrying out the following steps. Factorize the integer $n$ and compute all possible patterns $(n_1,n_2,n_3)$. For any of these $\mathcal{O}_{\epsilon}(n^{\epsilon})$ patterns it follows from the calculations in the proof of Theorem~\ref{thm: main theorem}, that the implied constant in inequality~\eqref{eq: 1 5 equation} may be chosen as $C:=\left(\frac{36}{n_1^2d_{23}}\right)$. For all choices of integers $y$, $z$, $x_{12}x_{13}$ and $x_{12}x_{123} \in \left[1, C^{\nicefrac{1}{5}}\left(\frac{n^3}{m^2}\right)^{\nicefrac{1}{5}}\right]$ we determine the integers $x_{12},x_{13},x_{23}$ and $x_{123}$ via factoring $x_{12}x_{13}$, $x_{12}x_{123}$ and a small number of integers mentioned in formulae \eqref{eq: stystem equation 1}-\eqref{eq: system equation 6}. All in all this leads to an algorithm of expected running time $\mathcal{O}_{\epsilon}\left(n^{\epsilon}\left(\frac{n^3}{m^2}\right)^{\nicefrac{1}{5}}\right)$.

As for representations of the form
\begin{equation} \label{eq: k fractions equation algorithmic aspects}
\frac{m}{n}=\sum_{i=1}^k\frac{1}{a_i}
\end{equation}
with $k>3$ we enumerate all possible choices for the denominators $a_i$, $1 \leq i \leq k-3$, and apply our algorithm for finding representations as sum of three unit fractions to determine all choices for the remaining three denominators, i.e. we solve 
\begin{equation} \label{eq: k fractions last three denominators excluded}
\frac{m}{n}-\sum_{i=1}^{k-3}\frac{1}{a_i}=\frac{1}{a_{k-2}}+\frac{1}{a_{k-1}}+\frac{1}{a_k}.
\end{equation}
We suppose the denominators $a_i$ in equation \eqref{eq: k fractions equation algorithmic aspects} are given in increasing order and prove upper bounds for the size of $a_i$, $1\leq i\leq k$. In particular we use an induction argument to show that $a_i \leq \alpha_i n^{2^{i-1}}$ where the finite sequence $(\alpha_i)_{1 \leq i \leq k}$ is recursively defined by $\alpha_1=k$ and $\alpha_i=(k-i+1)\prod_{j<i}\alpha_j$ for $2 \leq i \leq k$. For $i=1$ this bound follows easily from the following inequality
$$\frac{m}{n}=\frac{1}{a_1}+\cdots+\frac{1}{a_k}\leq \frac{k}{a_1}$$
which leads to $a_1 \leq \frac{kn}{m}\leq kn$. If we suppose the bound holds for $a_i$, with a similar argument we get
$$\frac{m}{n}-\frac{1}{a_1}-\cdots-\frac{1}{a_i}=\frac{1}{a_{i+1}}+\cdots+\frac{1}{a_k}\leq \frac{(k-i)}{a_{i+1}}.$$
The last inequality together with the induction hypothesis for $j<i+1$ implies 
$$a_{i+1}\leq (k-i)\frac{n\prod_{j<i+1}a_j}{m\prod_{j<i+1}a_j-n\sum_{j<i+1}\prod_{\substack{l <i+1 \\l\neq j }}a_l}\leq (k-i)n\prod_{j<i+1}a_j\leq \alpha_{i+1}n^{2^i}.$$
By definition $\alpha_i$ is a polynomial in $k$ of degree $2^i$ with leading coefficient $1$. Furthermore the denominator of the rational number on the left hand side of equation \eqref{eq: k fractions last three denominators excluded} is of size at most $n\prod_{i=1}^{k-3}a_i\ll_k n^{2^{k-3}}$. By the aforementioned result we can compute all decompositions as a sum of three unit fractions of this number in time $\mathcal{O}_{\epsilon,k}(n^{2^{k-3}(\nicefrac{3}{5} +\epsilon)})$. We have to compute these representations for at most $\prod_{i=1}^{k-3}a_i \ll_kn^{2^{k-3}-1}$ rational numbers which leads to an upper bound of
$$\mathcal{O}_{\epsilon,k}\left(n^{2^{k-3}(\nicefrac{8}{5}+\epsilon)-1}\right)$$
for the running time.
\end{proof}

\begin{remark}
The procedure for computing representations as a sum of $k$ unit fractions as described in the proof of Corollary~\ref{cor: algorithmic aspects} could lead to a speedup for calculations similar to those in~\cite{OnTheNumber}. In the calculations above the size of the numerator of the rational number on the left hand side of equation~\eqref{eq: k fractions last three denominators excluded}, which we denote by $\frac{m^{\prime}}{n^{\prime}}$, was not taken into account. We note that also the proof of the upper bound for $f_3(m,n)$ by Browning and Elsholtz~\cite{TheNumber}*{Theorem 2} may be similarly turned into an algorithm of running time $\mathcal{O}_{\epsilon}\left(n^{\epsilon}\left(\frac{n}{m}\right)^{\nicefrac{2}{3}}\right)$. In practice one would check dynamically if $m^{\prime} \ll (n^{\prime})^{\nicefrac{1}{4}}$ before computing the representations as a sum of three unit fractions of $\frac{m^{\prime}}{n^{\prime}}$. If this is the case, the algorithm described in the first part of the proof of Corollary~\ref{cor: algorithmic aspects} should be applied, if $m^{\prime} \gg (n^{\prime})^{\nicefrac{1}{4}}$ the method of~\cite{TheNumber} should be used.  
\end{remark}

\section{Sums of $k$ unit fractions} 

In this section we will prove Theorem~\ref{thm: k fractions theorem}. Browning and Elsholtz used an induction argument on their bound for the quantity $f_3(m,n)$ to get bounds for $f_k(m,n)$ for $k \geq 4$. Using their arguments directly on our result from Theorem~\ref{thm: main theorem} would lead to worse upper bounds than those of Browning and Elsholtz. The reason is that our bound for $f_3(m,n)$ is weaker than the one in~\cite{TheNumber} when $m$ is large.

As in~\cite{TheNumber}*{Section 4} the proof of Theorem~\ref{thm: k fractions theorem} will be based on the observation that from equation \eqref{eq: k fractions pattern equation} it follows that 
$$f_k(m,n)\leq \sum_{\frac{n}{m} < n_1t_1 \leq \frac{kn}{m} }f_{k-1}(mn_1t_1-n,n_1t_1n),$$
which, after introducing the parameter $u=mn_1t_1-n$, becomes
\begin{equation} \label{eq: Browning Elsholtz u equation}
f_k(m,n)\leq \sum_{\substack{0 < u \leq (k-1)n \\m|u+n}}f_{k-1}\left(u,\frac{n(u+n)}{m}\right).
\end{equation}
 
The improvement in Theorem~\ref{thm: k fractions theorem} stems from extending the method of Browning and Elsholtz by applying the following new idea. In the case of $k=4$ we do not consider the sum on the right hand side of \eqref{eq: Browning Elsholtz u equation} as a whole but we split the sum into two parts. In the first part we collect the values of $u$ where $0<u\leq n^{\delta}$ for some $0<\delta <1$ which will be chosen later. This sum will be small since it contains few summands. 

The second part will consist of all summands where $u>n^{\delta}$. This corresponds to $n_1t_1>\frac{n+n^{\delta}}{m}$ which will force $n_2t_2$ and $n_3t_3$ to be small.

The following Lemma~\ref{lem: Browning Elsholtz f3 bound} is \cite{TheNumber}*{Theorem 2}.
\begin{customlem}B \label{lem: Browning Elsholtz f3 bound}
For any $\epsilon >0$, we have
$$f_3(m,n)\ll_{\epsilon} n^{\epsilon}\left(\frac{n}{m}\right)^{\frac{2}{3}}.$$
\end{customlem}
In the proof of Theorem~\ref{thm: k fractions theorem} below we make use of Lemma~\ref{lem: Browning Elsholtz f3 bound} rather than Theorem~\ref{thm: main theorem}. Furthermore we will use a lifting procedure which was first used by Browning and Elsholtz~\cite{TheNumber} to lift upper bounds of the form
\begin{equation} \label{eq: lifting equation}
f_5(m,n)\ll_{\epsilon} n^{\epsilon}\left(\frac{n^2}{m}\right)^c
\end{equation}
to upper bounds for $f_k(m,n)$ for $k > 5$. For possible future use we write this procedure up in the following lemma and work through the original proof by Browning and Elsholtz with an arbitrary exponent $c>1$ in \eqref{eq: lifting equation}.

\begin{customlem}C \label{lem: Browning Elsholtz lifting lemma}
Suppose that there exists $c>1$ such that
$$f_5(m,n)\ll_{\epsilon} n^{\epsilon}\left(\frac{n^2}{m}\right)^c.$$
Then for any $k \geq 5$ we have
$$f_k(m,n)\ll_{\epsilon} (kn)^{\epsilon}\left(\frac{k^{\nicefrac{4}{3}}n^2}{m}\right)^{c2^{k-5}}.$$
\end{customlem}

\begin{proof}
We will inductively show that for $k \geq 5$ there exists $\Theta_k$ depending on $k$ such that we have 
\begin{equation} \label{eq: k upper bound independent}
f_k(m,n) \ll_{\epsilon} (kn)^{\epsilon}\left(\frac{k^{\Theta_k}n^2}{m}\right)^{c2^{k-5}}
\end{equation}
and we note that this is certainly true for $k=5$ by assumption. The proof works in three steps.

\textbf{1.} Establish an upper bound where the implied constant is allowed to depend on $k$.

For $k \geq 5$ we want to have a bound of the form
\begin{equation} \label{eq: k upper bound dependent}
f_k(m,n) \ll_{k,\epsilon} n^{\epsilon}\left(\frac{n^2}{m}\right)^{c2^{k-5}}
\end{equation}
where the implied constant is allowed to depend on $k$. An upper bound of this type may easily be achieved via \eqref{eq: Browning Elsholtz u equation}. Indeed this bound holds true for $k=5$ by assumption and assuming its existence for $f_k(m,n)$ we find for $f_{k+1}(m,n)$
\begin{align*}
f_{k+1}(m,n) &\ll \sum_{\substack{0 < u \leq kn \\ m|u+n}}f_k\left(u,\frac{n(u+n)}{m}\right)\ll_{k,\epsilon} n^{\epsilon}\left(\frac{n^2}{m}\right)^{c2^{k-4}}\sum_{u =1}^{\infty}\frac{1}{u^{c2^{k-5}}} \\ 
&\ll_{k,\epsilon} n^{\epsilon}\left(\frac{n^2}{m}\right)^{c2^{k-4}},
\end{align*}
where we used that $c>1$.

\textbf{2.} Use inequality \eqref{eq: Browning Elsholtz u equation} and split the sum into two parts.

For the upper bound where the implied constant is independent of $k$ we again suppose it to be true for $f_k(m,n)$ with $k\geq 5$ and inductively prove it to hold for $f_{k+1}(m,n)$. Using inequalities \eqref{eq: Browning Elsholtz u equation} and \eqref{eq: k upper bound independent} we get
\begin{align*}
f_{k+1}(m,n) &\ll \sum_{\substack{0 < u \leq kn \\ m|u+n}}f_k\left(u,\frac{n(u+n)}{m}\right) \\ 
&\ll \sum_{\substack{0 < u \leq (L-1)n \\ m|u+n}}f_k\left(u,\frac{n(u+n)}{m}\right)  + \sum_{\substack{(L-1)n < u \leq kn \\ m|u+n}}f_k\left(u,\frac{n(u+n)}{m}\right) \\
&\ll (kn)^{\epsilon}k^{\Theta_kc2^{k-5}}\left(\frac{n^2}{m}\right)^{c2^{k-4}}\times \\
&\left(\sum_{0 < u \leq (L-1)n}\frac{1}{u^{c2^{k-5}}}L^{c2^{k-4}}+\sum_{(L-1)n<u\leq kn}\frac{1}{u^{c2^{k-5}}}(k+1)^{c2^{k-4}}\right).
\end{align*} 
Since $c2^{k-5}>1$ the infinite sums over $\frac{1}{u^{c2^{k-5}}}$ converge. For the first sum we use that the sum is bounded by a constant for the second sum we use the following more accurate bound
$$\sum_{(L-1)n < u \leq kn}\frac{1}{u^{c2^{k-5}}} \leq \sum_{u=L}^{\infty}\frac{1}{u^{c2^{k-5}}} \ll \int_{L}^{\infty}\frac{1}{u^{c2^{k-5}}}\mathrm{d}u\ll L^{1-c2^{k-5}}.$$
Together with the fact that $(a+b)^{\alpha} \geq a^{\alpha}+b^{\alpha}$ for $a,b > 0$ and $\alpha > 1$ this shows that
\begin{align*}
f_{k+1}&(m,n) \\ 
& \ll_{\epsilon}((k+1)n)^{\epsilon}(k+1)^{\Theta_kc2^{k-5}}\left(\frac{n^2}{m}\right)^{c2^{k-4}}\left(L^{c2^{k-4}}+\left(\frac{k+1}{L^{\nicefrac{1}{2}-(c2^{k-4})^{-1}}}\right)^{c2^{k-4}}\right) \\
&\ll_{\epsilon} ((k+1)n)^{\epsilon}(k+1)^{\Theta_kc2^{k-5}}\left(\frac{n^2}{m}\right)^{c2^{k-4}}\left(L+\frac{k+1}{L^{\nicefrac{1}{2}-(c2^{k-4})^{-1}}}\right)^{c2^{k-4}}.
\end{align*}

\textbf{3.} Optimizing for $L$ and determining an upper bound for $\Theta_k$.

By the bound we derived in step 1 we may suppose that $k \geq \max \{\frac{\log(\frac{2}{3}(c\epsilon)^{-1})}{\log 2}+4,(\frac{1+\sqrt{5}}{2})^{\nicefrac{1}{\epsilon}}-1\}$. With $L=(k+1)^{\nicefrac{2}{3}}$ we get
\begin{align*}
&f_{k+1}(m,n) \\
&\ll_{\epsilon} ((k+1)n)^{\epsilon}(k+1)^{\Theta_kc2^{k-5}}\left(\frac{n^2}{m}\right)^{c2^{k-4}}(k+1)^{\nicefrac{2}{3}\cdot c2^{k-4}}\left(1+L^{(c2^{k-4})^{-1}}\right)^{c2^{k-4}} \\
&\ll_{\epsilon} (k+1)^{\epsilon(1+c2^{k-3})}n^{\epsilon}(k+1)^{c2^{k-4}(\nicefrac{\Theta_k}{2}+\nicefrac{2}{3})}\left(\frac{n^2}{m}\right)^{c2^{k-4}}.
\end{align*}
With $\Theta_{k+1}=\frac{\Theta_k}{2}+\frac{2}{3}$ and an appropriate choice of $\epsilon$ this implies
$$f_{k+1} \ll_{\epsilon} ((k+1)n)^{\epsilon}\left(\frac{(k+1)^{\Theta_{k+1}}n^2}{m}\right)^{c2^{(k+1)-5}}$$
Since for $\Theta_5 \leq \frac{4}{3}$ the sequence recursively defined by $\Theta_{k+1}=\frac{\Theta_k}{2}+\frac{2}{3}$ monotonically increases towards its limit $\frac{4}{3}$ we eventually get for any $k \geq 5$:
$$f_k(m,n)\ll_{\epsilon} (kn)^{\epsilon}\left(\frac{k^{\nicefrac{4}{3}}n^2}{m}\right)^{c2^{k-5}}.$$
\end{proof}

\begin{proof}[Proof of Theorem~\ref{thm: k fractions theorem}.]
In the following $\delta<1$ is a fixed constant to be chosen at the end of the proof. We start with proving bounds on $f_4(m,n)$ and we write $f_4(m,n)=f_4^{(1)}(m,n)+f_4^{(2)}(m,n)$. Here $f_4^{(1)}(m,n)$ counts those solutions of equation \eqref{eq: k fractions pattern equation} with $n_1t_1 \leq \frac{n+n^{\delta}}{m}$ and $f_4^{(2)}(m,n)$ those with $n_1t_1 > \frac{n+n^{\delta}}{m}$. From \eqref{eq: Browning Elsholtz u equation} we have that
\begin{align*}
f_4(m,n) &= f_4^{(1)}(m,n)+f_4^{(2)}(m,n) \leq \sum_{\substack{0 < u \leq n^{\delta} \\ m|u+n}}f_{3}\left(u,\frac{n(u+n)}{m}\right)+f_4^{(2)}(m,n) \\
&=S_1+f_4^{(2)}(m,n).
\end{align*}
We use the following estimate (uniform in $a \in \mathbb{Z}$)
\begin{equation} \label{eq: Browning Elsholtz geometric sum bound}
\sum_{\substack{n \leq x \\ n \equiv a \bmod q}} n^{- \Theta}=\frac{x^{1-\Theta}}{(1+\Theta) q} + \mathcal{O}_{\Theta}(1).
\end{equation} 
To bound the sum $S_1$ we use \eqref{eq: Browning Elsholtz geometric sum bound} and Lemma~\ref{lem: Browning Elsholtz f3 bound} to get
\begin{equation}\label{eq: geometric sum bound consequence}
S_1 \ll_{\epsilon} n^{\epsilon}\left(\frac{n^2}{m}\right)^{\frac{2}{3}}\sum_{\substack{0 < u \leq n^{\delta} \\ m|u+n}}\frac{1}{u^{\frac{2}{3}}} \ll_{\epsilon} n^{\epsilon}\left(\frac{n^2}{m}\right)^{\frac{2}{3}}\left(\frac{n^{\frac{\delta}{3}}}{m}+1\right). 
\end{equation}
Next we prove that
$$f_4^{(2)}(m,n) \ll_{\epsilon} n^{\epsilon}\frac{n^{\nicefrac{(12-4\delta)}{5}}}{m^{\nicefrac{8}{5}}}.$$
Since there are at most $\mathcal{O}_{\epsilon}(n^{\epsilon})$ distinct patterns $(n_1,n_2,n_3,n_4)$ it suffices to prove this bound for all solutions counted by $f_4^{(2)}(m,n)$ corresponding to a fixed pattern. To get an upper bound for the contribution of $f_4^{(2)}(m,n)$ we thus suppose that $(n_1,n_2,n_3,n_4)$ is fixed and note that the fact that $\frac{4n}{m} \geq n_1t_1 > \frac{n+n^{\delta}}{m}$ implies the following upper bound for $n_2t_2$:
$$\frac{3}{n_2t_2} \geq \frac{mn_1t_1-n}{nn_1t_1} \geq \frac{mn^{\delta}}{4n^2}.$$
Therefore we have
\begin{equation} \label{eq: improved t2 bound}
n_2t_2 \ll \frac{n^{2-\delta}}{m}.
\end{equation}
We use again relative greatest common divisors and write a representation of $\frac{m}{n}$ as a sum of four unit fractions as
\begin{align*}
\frac{m}{n}&=\frac{1}{n_1x_1x_{12}x_{13}x_{14}x_{123}x_{124}x_{134}x_{1234}}+\frac{1}{n_2x_2x_{12}x_{23}x_{24}x_{123}x_{124}x_{234}x_{1234}}\\
&+\frac{1}{n_3x_3x_{13}x_{23}x_{34}x_{123}x_{134}x_{234}x_{1234}}+\frac{1}{n_4x_4x_{14}x_{24}x_{34}x_{124}x_{134}x_{234}x_{1234}}.
\end{align*}
It is again easy to see that $x_1=x_2=x_3=x_4=1$ and multiplying out the last equation yields
\begin{equation} \label{eq: multiplied out 4 terms equation}
\begin{split} 
m&x_{12}x_{13}x_{14}x_{23}x_{24}x_{34}x_{123}x_{124}x_{134}x_{234}x_{1234} \\
&=\frac{n}{n_1}x_{23}x_{24}x_{34}x_{234}+\frac{n}{n_2}x_{13}x_{14}x_{34}x_{134}+\frac{n}{n_3}x_{12}x_{14}x_{24}x_{124}+\frac{n}{n_4}x_{12}x_{13}x_{23}x_{123}.
\end{split}
\end{equation}
From equation \eqref{eq: multiplied out 4 terms equation} we see that the quantity
$$z_{34}=\frac{\frac{n}{n_3}x_{12}x_{14}x_{24}x_{124}+\frac{n}{n_4}x_{12}x_{13}x_{23}x_{123}}{x_{34}}$$
is an integer and we use
\begin{equation} \label{eq: z34 equation}
z_{34}x_{34}=\frac{n}{n_3}x_{12}x_{14}x_{24}x_{124}+\frac{n}{n_4}x_{12}x_{13}x_{23}x_{123}.
\end{equation}
By \eqref{eq: improved t2 bound} and $\frac{4n}{m} \geq n_1t_1 > \frac{n+n^{\delta}}{m}$ we have
\begin{equation} \label{eq: upper bound all 4 factors}
(t_1t_2)^4=(x_{12}x_{13}x_{14}x_{123}x_{124}x_{134}x_{1234})^4(x_{12}x_{23}x_{24}x_{123}x_{124}x_{234}x_{1234})^4 \ll \frac{n^{12-4\delta}}{m^8},
\end{equation}
and we write
\begin{equation} \label{eq: 4 terms factors to consider}
\begin{split}
&(x_{12}x_{13}x_{14}x_{123}x_{124}x_{134}x_{1234})^4(x_{12}x_{23}x_{24}x_{123}x_{124}x_{234}x_{1234})^4 = \\ 
&(x_{12}x_{13}x_{14}x_{23}x_{24}x_{123}x_{124}x_{1234})(x_{12}x_{13}x_{23}x_{24}x_{123}x_{124}x_{134}x_{234}x_{1234})\times \\
&(x_{12}x_{14}x_{23}x_{24}x_{123}x_{124}x_{134}x_{234}x_{1234})(x_{12}x_{13}x_{14}x_{24}x_{123}x_{124}x_{134}x_{234}x_{1234})\times \\
&(x_{12}^4x_{13}x_{14}x_{23}x_{123}^4x_{124}^4x_{134}x_{234}x_{1234}^4).
\end{split}
\end{equation}
We show that each of the five factors in brackets on the right hand side of the last equation corresponds to at most $\mathcal{O}_{\epsilon}(n^{\epsilon})$ solutions of \eqref{eq: multiplied out 4 terms equation}, where $\epsilon$ is an arbitrarily small positive number. First we note that all factors are of polynomial size in $n$ and by Lemma~\ref{lem: divisor bound}, given one of these factors, we have $\mathcal{O}_{\epsilon}(n^{\epsilon})$ choices for all the $x_{ij}$, $x_{ijk}$ and $x_{1234}$ appearing as sub-factors.

Given positive integer constants $C_0,C_1,C_2$ and $C_3$ of size polynomial in $n$, we count the number of integer solutions $(A,B)$ of the equation
\begin{equation} \label{eq: Elsholtz Browning factorization trick}
C_0AB=C_1A+C_2B+C_3.
\end{equation}
Rewriting this equation in the form
$$(C_0A-C_2)(C_0B-C_1)=C_0C_3+C_1C_2$$
we see that the number of solutions $(A,B)$ is bounded by $\mathcal{O}_{\epsilon}(n^{\epsilon})$. For the second to the fifth factor on the right hand side of \eqref{eq: 4 terms factors to consider} exactly two parameters are missing to uniquely determine a solution of \eqref{eq: multiplied out 4 terms equation}. All of these factors miss the parameter $x_{34}$. The second one additionally misses $x_{14}$, the third one $x_{13}$, the fourth one $x_{23}$ and the last one $x_{24}$. In all of these cases equation \eqref{eq: multiplied out 4 terms equation} provides an instance of \eqref{eq: Elsholtz Browning factorization trick} where the variables $A$ and $B$ correspond to the two missing parameters (the term containing both missing parameters on the right hand side of \eqref{eq: multiplied out 4 terms equation} may be shifted to the left hand side). 

In the first factor on the right hand side of \eqref{eq: 4 terms factors to consider} three parameters are missing. From equation \eqref{eq: z34 equation} we see that we have at most $\mathcal{O}_{\epsilon}(n^{\epsilon})$ choices for the parameter $x_{34}$. To see the same bound for the parameters $x_{134}$ and $x_{234}$ we use again that equations of type \eqref{eq: Elsholtz Browning factorization trick} can be factorized.

Since by \eqref{eq: upper bound all 4 factors} at least one of the factors on the right hand side of \eqref{eq: 4 terms factors to consider} is $\mathcal{O}\left(\frac{n^{\nicefrac{(12-4\delta)}{5}}}{m^{\nicefrac{8}{5}}}\right)$ we have that
\begin{equation} \label{eq: f4 delta bound}
f_4^{(2)}(m,n) \ll_{\epsilon} n^{\epsilon}\frac{n^{\nicefrac{(12-4\delta)}{5}}}{m^{\nicefrac{8}{5}}}.
\end{equation}
Again we note that in the considerations above the divisor bound from Lemma~\ref{lem: divisor bound} was applied a bounded number of times and the bound in \eqref{eq: f4 delta bound} follows upon redefining the choice of $\epsilon$. Choosing $\delta=\frac{16}{17}$ in \eqref{eq: geometric sum bound consequence} and \eqref{eq: f4 delta bound} we get
\begin{equation} \label{eq: f4 final bound}
f_4(m,n) \ll n^{\epsilon}\left(\frac{n^{\nicefrac{4}{3}}}{m^{\nicefrac{2}{3}}}+\frac{n^{\nicefrac{28}{17}}}{m^{\nicefrac{8}{5}}}\right).
\end{equation}
To bound $f_5(m,n)$ we again use \eqref{eq: Browning Elsholtz u equation} and \eqref{eq: Browning Elsholtz geometric sum bound} and get
\begin{equation} \label{eq: f5 final bound}
f_5(m,n)\ll n^{\epsilon}\sum_{\substack{0 < u \leq 4 n \\ m|u+n}}\left(\left(\frac{n^2}{m}\right)^{\nicefrac{4}{3}}\frac{1}{u^{\nicefrac{2}{3}}}+\left(\frac{n^2}{m}\right)^{\nicefrac{28}{17}}\frac{1}{u^{\nicefrac{8}{5}}}\right) \ll n^{\epsilon}\left(\frac{n^2}{m}\right)^{\nicefrac{28}{17}}.
\end{equation}

Setting $c=\frac{28}{17}$ in Lemma~\ref{lem: Browning Elsholtz lifting lemma} yields the bound in Theorem~\ref{thm: k fractions theorem}.
\end{proof}

\section{Lower bounds}

\begin{proof}[Proof of Theorem~\ref{thm: lower bounds theorem}]
To prove the first bound we are going to extend an idea used in the proof of~\cite{TheNumber}*{Theorem 1}. As before we use highly composite denominators $n\in \mathbb{N}$, but here we show that there are many values $a_1$ with many corresponding pairs $(a_2,a_3)$ giving a solution of 
$$\frac{m}{n}=\frac{1}{a_1}+\frac{1}{a_2}+\frac{1}{a_3}.$$
To prove our lower bound for $f_3(m,n)$ we consider the set 
$$\mathcal{N}=\left\{mn^{\prime}:n^{\prime}=\prod_{i=1}^rp_i\right\},$$
where $p_i$ is the $i$-th prime. In choosing the denominators $n \in \mathcal{N}$ we reduce the problem to finding many solutions of the equation
$$\frac{1}{n^{\prime}}=\frac{1}{a_1}+\frac{1}{a_2}+\frac{1}{a_3}.$$
We set $a_1=n^{\prime}+d$, where $d$ is any divisor of $n^{\prime}$, and are left with
$$\frac{1}{n^{\prime}}-\frac{1}{n^{\prime}+d}=\frac{1}{n^{\prime}\left(\frac{n^{\prime}}{d}+1\right)}=\frac{1}{a_2}+\frac{1}{a_3}.$$
For two divisors $d_1$ and $d_2$ of $n^{\prime}$ with $(d_1,d_2)=1$ we have
\begin{equation} \label{eq: 3 lower bound equation 2}
\frac{1}{n^{\prime}\left(\frac{n^{\prime}}{d}+1\right)}=\frac{1}{\frac{n^{\prime}\left(\frac{n^{\prime}}{d}+1\right)}{d_1}(d_1+d_2)}+\frac{1}{\frac{n^{\prime}\left(\frac{n^{\prime}}{d}+1\right)}{d_2}(d_1+d_2)}.
\end{equation}
We note that for two pairs of divisors $d_1,d_2$ and $d_1^{\prime},d_2^{\prime}$ with $(d_1,d_2)=1$ and $(d_1^{\prime},d_2^{\prime})=1$ it follows that
$$\frac{n^{\prime}\left(\frac{n^{\prime}}{d}+1\right)}{d_1}(d_1+d_2)=\frac{n^{\prime}\left(\frac{n^{\prime}}{d}+1\right)}{d_1^{\prime}}(d_1^{\prime}+d_2^{\prime})\Leftrightarrow \frac{d_1}{d_2}=\frac{d_1^{\prime}}{d_2^{\prime}}.$$
Since $d_1$ and $d_2$ as well as $d_1^{\prime}$ and $d_2^{\prime}$ are coprime we get $d_1=d_1^{\prime}$ and $d_2=d_2^{\prime}$. This implies that each pair $(d_1,d_2)$ with $d_1<d_2$ gives a unique solution of equation \eqref{eq: 3 lower bound equation 2}. Furthermore for any choice of $d,d_1,d_2$ it follows that 
$$n^{\prime}+d<\frac{n^{\prime}\left(\frac{n^{\prime}}{d}+1\right)}{d_1}(d_1+d_2),$$
which altogether implies that by counting all possible choices for $d,d_1,d_2$
we get a lower bound for twice the value of $f_3(1,n^{\prime})$.

Choosing $n^{\prime}$ as in the construction of the set $\mathcal{N}$, we have $2^{\omega(n^{\prime})}$ choices for the divisor $d$ and using the binomial theorem there are
$$\sum_{i=0}^{\omega(n^{\prime})}\binom{\omega(n^{\prime})}{i}\sum_{j=0}^{\omega(n^{\prime})-i}\binom{\omega(n^{\prime})-i}{j}=\sum_{i=0}^{\omega(n^{\prime})}\binom{\omega(n^{\prime})}{i}2^{\omega(n^{\prime})-i}=3^{\omega(n^{\prime})}$$
choices for the divisors $d_1$ and $d_2$. As a consequence of the prime number theorem it is known that $\omega(n^{\prime}) \sim \frac{\log n^{\prime}}{\log \log n^{\prime}}$ and hence, for $n\in \mathcal{N}$
\begin{align*}
f_3(m,n)=f_3(1,n^{\prime}) \geq \frac{1}{2}2^{\omega(n^{\prime})}3^{\omega(n^{\prime})} &\geq \exp\left((\log 6+o(1))\frac{\log n^{\prime}}{\log \log n^{\prime}}\right)\\
&\geq \exp\left((\log 6+o_m(1))\frac{\log n}{\log \log n}\right).
\end{align*}

For the second bound we modify the idea used in the proof of~\cite{CountingThe}*{Theorem 1.8}. For fixed $m \in \mathbb{N}$, as a consequence of the Tur{\'a}n-Kubilius inequality (see e.g. \cite{IntroductionA}*{p. 434}) we get that the set
$$\mathcal{M}_1=\bigcap_{\substack{k \leq m \\ (k,m)=1}}\left\{n \in \mathbb{N}:  \omega(n,k,m)=\left(\frac{1}{\varphi(m)}+o(1)\right)\log\log n \right\}$$
is a set with density one, i.e. $\lim_{x \rightarrow \infty}\frac{\{n \in \mathcal{M}_1:n\leq x\}}{x}=1$.

For any $n \in \mathcal{M}_1$ we write $\frac{m}{n}=\frac{m^{\prime}}{n^{\prime}}$ with $(m^{\prime},n^{\prime})=1$ and note that $\omega(n,k,m)=\omega(n^{\prime},k,m)$ for all $k$ with $(k,m)=1$. By construction of the set $\mathcal{M}_1$ and since $n^{\prime}$ is coprime to $m^{\prime}$, we find $\left(\frac{1}{\varphi(m)}+o(1)\right)\log \log n$ prime divisors $p$ of $n^{\prime}$ in the residue class $-n^{\prime}\bmod m^{\prime}$. For any of these prime divisors we have
$$\frac{m^{\prime}}{n^{\prime}}-\frac{1}{\frac{n^{\prime}+p}{m^{\prime}}}=\frac{p}{n^{\prime}\frac{n^{\prime}+p}{m^{\prime}}}=\frac{1}{n^{\prime}\frac{\nicefrac{n^{\prime}}{p}+1}{m^{\prime}}}$$
where $\frac{\nicefrac{n^{\prime}}{p}+1}{m^{\prime}}$ is an integer. Again, by construction of the set $\mathcal{M}_1$, for the number of prime factors of $n^{\prime}$ we have
$$\omega(n^{\prime}) \geq \omega(n)-\omega(m) = (1+o_m(1))\log \log n.$$

For two coprime divisors $d_1$ and $d_2$ of $n^{\prime}$ we construct decompositions of $\frac{1}{n^{\prime}\frac{\nicefrac{n^{\prime}}{p}+1}{m^{\prime}}}$ as a sum of two unit fractions as in \eqref{eq: 3 lower bound equation 2}. As above we see that for any prime divisor $p$ of $n^{\prime}$ in the residue class $-n^{\prime} \bmod m^{\prime}$ there are at least $3^{\omega(n^{\prime})}$ such decompositions and all of them are distinct.

Altogether this implies that for any $n \in \mathcal{M}_1$
\begin{align*}
f(m,n) &\geq \left(\frac{1}{\varphi(m)}+o(1)\right)3^{\omega(n^{\prime})} \cdot \log \log n \geq \left(\frac{1}{\varphi(m)}+o(1)\right)3^{\omega(\nicefrac{n}{m})} \cdot \log \log n \\
&\geq \exp((\log 3 + o_m(1))\log \log n)\cdot \log \log n.
\end{align*}

Finally, we prove the improved lower bound on $f_3(4,n)$. To do so, we set
\begin{align*}
\mathcal{M}_2 &=\left(\bigcap_{i \in \{1,3\}}\{n \in \mathbb{N}:\frac{\tau(n,4)}{4}\leq \tau(n,i,4)\}\right) \cap \\ 
&\cap \{n \in \mathbb{N}:\omega(n)=(1+o(1))\log \log n\} \cap \{n \in \mathbb{N}: \tau(n) \geq (\log n)^{\log 2+o(1)}\}.
\end{align*}
The first two sets with $i=1$ and $i=3$ in the intersection in the definition of $\mathcal{M}_2$ have density $1$ by~\cite{OnTheDistribution}*{Theorem 5}. For the third and the fourth set this is true by the Tur{\'a}n-Kubilius inequality (again see e.g. \cite{IntroductionA}*{p. 434}). Hence the set $\mathcal{M}_2$ has density $1$ and we investigate what happens for $n$ in a certain residue class modulo $4$.

If $n \equiv 0 \bmod 4$, then $\frac{4}{n}=\frac{1}{\nicefrac{n}{4}}$ and for any divisor $d$ of $\frac{n}{4}$ we have
$$\frac{1}{\frac{n}{4}}-\frac{1}{\frac{n}{4}+d}=\frac{1}{\frac{n}{4}\left(\frac{n}{4d}+1\right)}.$$ 
Since $\omega\left(\frac{n}{4}\right)\geq \omega(n)-1$, with the same arguments as above, we conclude that the number of representations of $\frac{1}{\nicefrac{n}{4}\left(\nicefrac{n}{4d}+1\right)}$ as a sum of two unit fractions is at least of order $3^{\omega(\nicefrac{n}{4})}=3^{(1+o(1))\log \log n}$. From $\tau(n)=\prod_{p|n}(\nu_p(n)+1)$ we easily deduce that $\tau\left(\frac{n}{4}\right)\geq \frac{1}{3}\tau(n)$. Altogether we thus get
$$f_3(4,n)\geq \frac{1}{3} \tau\left(\frac{n}{4}\right)3^{\omega(\nicefrac{n}{4})} \geq \exp((\log 6 +o(1))\log \log n).$$

If $n \equiv 2 \bmod 4$, then $\frac{n}{2}$ is odd and the same is true for all $\tau\left(\frac{n}{2}\right)=\frac{1}{2}\tau(n)$ divisors of $\frac{n}{2}$. We have $\frac{4}{n}=\frac{2}{\nicefrac{n}{2}}$ and for any divisor $d$ of $\frac{n}{2}$
$$\frac{2}{\frac{n}{2}}-\frac{1}{\frac{\nicefrac{n}{2}+d}{2}}=\frac{1}{\frac{n}{2}\left(\frac{\nicefrac{n}{2d}+1}{2}\right)}.$$
As above we get
$$f_3(4,n)\geq \tau\left(\frac{n}{2}\right)3^{\omega(n)-1} \geq \exp((\log 6 + o(1))\log \log n).$$

Finally, if $n\equiv r \bmod 4$ for $r \in \{1,3\}$, we have $\tau(n,4)=\tau(n)$ and by construction of the set $\mathcal{M}_2$, we have more than $\frac{\tau(n)}{4}$ divisors $d$ of $n$ in the residue class $-r \bmod 4$. Again, for any of these divisors we have
$$\frac{4}{n}-\frac{1}{\frac{n+d}{4}}=\frac{1}{n\left(\frac{\nicefrac{n}{d}+1}{4}\right)}.$$
Applying the arguments used previously one more time, we find
$$f_3(4,n)\geq \frac{\tau(n)}{4}3^{\omega(n)} \geq \exp((\log 6 +o(1))\log \log n)$$
also in this case.

\end{proof}

\begin{remark}
The difference in the constants in the exponential functions of the lower bounds on $f(m,n)$ and $f(4,n)$ for sets of integers with density one in Theorem~\ref{thm: lower bounds theorem} is basically due to cancellation effects when dealing with general $m$. In particular we deal with $\frac{m}{n}=\frac{m^{\prime}}{n^{\prime}}$, where $(m^{\prime},n^{\prime})=1$, and we would need to have good control of the number of divisors of $n^{\prime}$ in the residue class $-n^{\prime} \bmod m^{\prime}$ to get the $\log 6$ exponent also in the general case. However, if we do not ask about a lower bound holding for a set of density one within the positive integers, but for a set of integers of density one within the set $\mathcal{S}$ of positive integers coprime to a given $m \in \mathbb{N}$, we may achieve the $\log 6$ exponent. To do so we replace the set $\mathcal{M}_1$ with
\begin{align*}
\mathcal{M}_1^{\prime} &=\left(\bigcap_{\substack{1\leq i \leq m \\ (i,m)=1}}\{n \in \mathbb{N}:\tau(n,i,m)=\frac{\tau(n)}{\varphi(m)}(1+o_m(1))\}\right) \cap \\ 
&\cap \{n \in \mathbb{N}:\omega(n)=(1+o(1))\log \log n\} \cap \{n \in \mathbb{N}: \tau(n) \geq (\log n)^{\log 2+o(1)}\} \cap \mathcal{S}.
\end{align*}
Now we may use results from~\cite{OnTheDistribution}*{Theorem 5} as well as Tur{\'a}n-Kubilius like previously and get that $\mathcal{M}_1^{\prime}$ has density one in $\mathcal{S}$. Instead of constructing the first denominator via shifts in prime factors of $n$ we may use arbitrary divisors of $n$ in this case, which leads to the improvement mentioned above.
\end{remark}

\begin{proof}[Proof of Theorem~\ref{thm: lower bound for primes theorem}]
We consider solutions corresponding to the pattern $(1,p,p)$. In equation~\eqref{eq: 3 fractions main equation} we suppose that $a_1$ is the denominator with $(a_1,p)=1$ and we write $a_1=t_1$, $a_2=pt_2$ and $a_3=pt_3$. We use the parametrization via relative greatest common divisors of the $t_i$ and applying Lemma~\ref{lem: rgcd lemma} it is easy to see, that $x_1=x_2=x_3=1$ in this case. Hence we are looking for infinitely many primes $p \equiv e \bmod f$ such that for given $m \in \mathbb{N}$ the equation
\begin{equation} \label{eq: lower bound for primes equation 1}
\frac{m}{p}=\frac{1}{x_{12}x_{13}x_{123}}+\frac{1}{px_{12}x_{23}x_{123}}+\frac{1}{px_{13}x_{23}x_{123}}
\end{equation}
has many solutions. Multiplying equation~\eqref{eq: lower bound for primes equation 1} by the common denominator we get
$$mx_{12}x_{13}x_{23}x_{123}=px_{23}+x_{13}+x_{12}.$$ 
Setting $x_{12}+x_{13}=kx_{23}$, $M=\lcm(m,f)$ and $x_{12}=\frac{M}{m}$ we deduce that
$$M\left(kx_{23}-\frac{M}{m}\right)x_{123}=p+k.$$
The residue class $(f-e)\equiv-e \bmod f$ splits into the residue classes $(f-e)+if \bmod M$, for $0 \leq i \leq \frac{m}{(m,f)}-1$. Note, that $\gcd\left(f,\frac{m}{(m,f)}\right)=1$ hence the integers $i\cdot f$ for $0\leq i \leq \frac{m}{(m,f)}-1$ are a full system of residues modulo $\frac{m}{(m,f)}$. In particular there exists a $0\leq j \leq \frac{m}{(m,f)}-1$ such that $(f-e)+jf \equiv 1 \bmod \frac{m}{(m,f)}$. We set $k=(f-e)+jf$ and with $(e,f)=1$ we altogether see that $(M,k)=1$.

Now let $Q=\prod_{i=1}^rq_i$ where $q_i$ is the $i$-th prime with $q_i \equiv -\frac{M}{m}\bmod k$ and $q_i>M$. Note that $\gcd(M,Q)=1$.

With $r=\left\lfloor \frac{\log t}{\varphi(k) C\log \log t} \right\rfloor$ we find that $Q$ is of order $t^{\nicefrac{1}{C}+o_{f,m}(1)}$. We now use Linnik's theorem on primes in arithmetic progressions. As the modulus is very smooth we can use an exponent of $C=\frac{12}{5}+o(1)$, due to Chang~\cite{ShortCharacter}*{Corollary 11}. Hence we may find a prime $p$ of order $M^Ct^{1+o_{f,m}(1)}$ with
$$p \equiv -k \bmod QM.$$
This congruence implies that $p+k$ is divisible by the primes $q_1, \ldots, q_r$ and together with $k=(f-e)+jf$, we deduce that $p \equiv e \bmod f$ and $p+k \equiv 0 \bmod M$.

Let $l \in \mathbb{N}_0$ and $S$ be a subset of size $l\ord_k\left(-\frac{M}{m}\right)+1$ of the prime factors of $Q$. Hence $x_{23}=\frac{\prod_{q\in S}q+\frac{M}{m}}{k}$ is an integer and we set $x_{123}=\frac{p+k}{M\prod_{q\in S}q}$. We observe that any of these choices leads to a different solution of \eqref{eq: lower bound for primes equation 1}. To see this we look at the denominator $a_2=px_{12}x_{23}x_{123}$ of the second fraction on the right hand side of this equation.
Suppose that two sets $S$ and $S^{\prime}$ would lead to the same denominator $a_2$. With $x_{12}=\frac{M}{m}$ this would imply the existence of $x_{23} \neq x_{23}^{\prime}$ such that
$$p\frac{M}{m}x_{23}\frac{p+k}{M(kx_{23}-\frac{M}{m})}=p\frac{M}{m}x_{23}^{\prime}\frac{p+k}{M(kx_{23}^{\prime}-\frac{M}{m})}$$
from which we derive that
$$\frac{x_{23}}{x_{23}^{\prime}}=\frac{kx_{23}-\frac{M}{m}}{kx_{23}^{\prime}-\frac{M}{m}}=\frac{\prod_{q\in S}q}{\prod_{q^{\prime}\in S^{\prime}}q^{\prime}}.$$
If $q\in S$ would divide $x_{23}$ then $q$ would also divide $\frac{M}{m}$, which is impossible by construction of $Q$. We hence have that $\frac{\prod_{q\in S}q}{\prod_{q^{\prime}\in S^{\prime}}q^{\prime}}=1$ and thus $S=S^{\prime}$. 

To count the number of solutions we get with the above construction, we make use of a formula which can be found in~\cite{SumsOfEvenly}*{Theorem 1}, for example, and which states
\begin{equation} \label{eq: binomial sum formula}
\sum_{i\geq 0}\binom{n}{iu} = \frac{1}{u}\sum_{j=0}^{u-1}(1+\xi_u^j)^n,
\end{equation}
where $\xi_u=\exp\left(\frac{2\pi i}{u}\right)$. Note that for the term corresponding to $j=0$ in the sum on the right hand side of ~\eqref{eq: binomial sum formula} we get $2^n$ while for all other $j$ we have $|1+\xi_u^j|<2$. Hence we deduce
$$\sum_{i\geq 0}\binom{n}{iu}=\frac{2^n}{u}(1+o_u(1)).$$
The number of choices of the parameter $x_{23}$ is
\begin{align*}
\sum_{i\geq 0}&\binom{r}{i\ord_k\left(-\frac{M}{m}\right)+1}=\sum_{i\geq 0}\binom{r+1}{i\ord_k\left(-\frac{M}{m}\right)}-\sum_{i\geq 0}\binom{r}{i\ord_k\left(-\frac{M}{m}\right)}\\
&=\frac{2^{r+1}}{\ord_k\left(-\frac{M}{m}\right)}(1+o_{f,m}(1))-\frac{2^r}{\ord_k\left(-\frac{M}{m}\right)}(1+o_{f,m}(1)) \\
&=\frac{2^r}{\ord_k\left(-\frac{M}{m}\right)}(1+o_{f,m}(1)).
\end{align*}
Plugging in $r=\left\lfloor \frac{\log t}{\varphi(k) C\log \log t} \right\rfloor$ and using that $p \leq M^Ct^{1+o_{f,m}(1)}$ we get a lower bound of 
\begin{equation} \label{eq: lower bound primes last equation}
\begin{aligned}
f_3(m,p) &\gg_{f,m} \exp\left(\left(\frac{\log 2}{C \varphi(k)}+o_{f,m}(1)\right)\frac{\log t}{\log \log t}\right) \\
&\gg_{f,m} \exp\left(\left(\frac{5\log 2}{12 \lcm(m,f)}+o_{f,m}(1)\right)\frac{\log p}{\log \log p}\right).
\end{aligned}
\end{equation}
\end{proof}

\begin{remark}
The best known exponent for Linnik's Theorem takes care of the worst case modulus and is $5$ by work of Xylouris~\cite{UeberDieNullstellen}. Chang's result~\cite{ShortCharacter}*{Corollary 11} considers smooth moduli (as in our situation) and allows for the better exponent $\frac{12}{5}$. Harman investigated, in connection with constructing Carmichael numbers, what happens if one is allowed to avoid a small set of exceptional moduli. In this situation he improved the exponent to $\frac{1}{0.4736}$ (see~\cite{WattsMean}*{Theorem 1.2} and~\cite{OnTheNumberOfCarmichael} for some more explanation). As in our situation we choose the modulus $M$, and hence can avoid "bad"   factors, it seems possible that Theorem~\ref{thm: lower bound for primes theorem} can also be proved with a factor of $0.4736$ instead of $\frac{5}{12}\approx 0.4167$ in the exponent of the lower bound on $f_3(m,p)$.
\end{remark}

\begin{remark}
If we consider the case $m=4$, $f=4$ and $e\in \{1,3\}$ in Theorem~\ref{thm: lower bound for primes theorem}, we can explicitly compute $k$ in the first line of \eqref{eq: lower bound primes last equation}. We simply have $k=3$ if $e=1$ and $k=1$ if $e=3$ hence we arrive at the lower bounds
$$f_3(4,p)\gg \exp\left((0.1444+o(1))\frac{\log p}{\log \log p}\right)$$
if $e=1$ and
$$f_3(4,p)\gg \exp\left((0.2888+o(1))\frac{\log p}{\log \log p}\right)$$
if $e=3$.
\end{remark}

\section*{Acknowledgement}

The authors acknowledge the support of the Austrian Science Fund (FWF): W1230. Furthermore we would like to thank Igor Shparlinski and Glyn Harman for drawing our attention to the papers~\cite{ShortCharacter} and~\cites{OnTheNumberOfCarmichael,WattsMean}.

\begin{bibdiv}
\begin{biblist}

\bib{BruecheAls}{article}{
   Author                  = {Aigner, A.},
   Title                   = {Br\"uche als Summe von Stammbr\"uchen},
   Journal                 = {J. Reine Angew. Math.},
   Volume                  = {214/215},
   Date                    = {1964},
   Pages                   = {174--179},
}

\bib{OnTheNumber}{article}{
  Author                   = {Arce-Nazario, R.},
  Author                   = {Castro, F.},
  Author                   = {Figueroa, R.},
  Title                    = {On the number of solutions of {$\sum_{i=1}^{11}\frac1{x_i}=1$} in distinct odd natural numbers},
  Journal                  = {J. Number Theory},
  Volume                   = {133},
  Year                     = {2013},
  Number                   = {6},
  Pages                    = {2036--2046},
}

\bib{SumsOfEvenly}{article}{
  Author                   = {Benjamin, A. T.},
  Author                   = {Chen, B.},
  Author                   = {Kindred, K.},
  Title                    = {Sums of Evenly Spaced Binomial Coefficients},
  Journal                  = {Math. Mag.},
  Volume                   = {83},
  Number                   = {5},
  Year                     = {2010},
  Pages                    = {370--373},
}

\bib{OnTheDiophantine}{article}{
  Title                    = {On the {D}iophantine equation {$1=\sum 1/n_i+1/\prod n_i$} and a class of homologically trivial complex surface singularities},
  Author                   = {Brenton, L.},
  Author                   = {Hill, R.},
  Journal                  = {Pacific J. Math.},
  Volume                   = {133},
  Year                     = {1988},
  Number                   = {1},
  Pages                    = {41--67},
}

\bib{TheNumber}{article}{
  Title                    = {The number of representations of rationals as a sum of unit fractions},
  Author                   = {Browning, T. D.},
  Author                   = {Elsholtz, C.},
  Journal                  = {Illinois J. Math.},
  Year                     = {2011},
  Number                   = {2},
  Pages                    = {685--696},
  Volume                   = {55},
}

\bib{ShortCharacter}{article}{
  Author                   = {Chang, M.-C.},
  Title                    = {Short character sums for composite moduli},
  Journal                  = {J. Anal. Math.},
  Volume                   = {123},
  Year                     = {2014},
  Pages                    = {1--33},
}

\bib{EgyptianFractions}{article}{
  Title                    = {Egyptian fractions with restrictions},
  Author                   = {Chen, Y.-G.},
  Author                   = {Elsholtz, C.},
  Author                   = {Jiang, L.-L.}, 
  Journal                  = {Acta Arith.},
  Volume                   = {154},
  Year                     = {2012},
  Number                   = {2},
  Pages                    = {109--123},
}

\bib{UeberZerlegungen}{incollection}{
  Booktitle                = {in Gesammelte mathematische Werke, Zweiter Band (Fricke, R., Noether, E. and Ore, \"O., eds.), Friedr. Vieweg \& Sohn Akt.-Ges., Braunschweig, 1931},
  Title                    = {\"Uber Zerlegungen von Zahlen durch ihre gr\"o{\ss}ten gemeinsamen Teiler},
  Author                   = {Dedekind, R.},
  Pages                    = {103--147},
  Note                     = {Available online at \url{http://resolver.sub.uni-goettingen.de/purl?PPN235693928}},
}

\bib{EgyptianFractionsWith}{article}{
  Title                    = {Egyptian fractions with odd denominators},
  Author                   = {Elsholtz, C.},
  Journal                  = {Q. J. Math.},
  Volume                   = {67},
  Year                     = {2016},
  Number                   = {3},
  Pages                    = {425--430},
}

\bib{SumsOf}{book}{
  Title                    = {{S}ums of $k$ {U}nit {F}ractions},
  Author                   = {Elsholtz, C.},
  Publisher                = {Shaker Verlag, Aachen},
  Year                     = {1999},
  Note                     = {Phd Thesis, {Technische Universit\"at Darmstadt} (1998), 109 pages},
}

\bib{SumsOfArticle}{article}{
  Title                    = {Sums of {$k$} unit fractions},
  Author                   = {Elsholtz, C.},
  Journal                  = {Trans. Amer. Math. Soc.},
  Volume                   = {353},
  Year                     = {2001},
  Number                   = {8},
  Pages                    = {3209--3227},
}

\bib{CountingThe}{article}{
  Title                    = {Counting the number of solutions to the {E}rd{\H o}s-{S}traus equation on unit fractions},
  Author                   = {Elsholtz, C.},
  Author                   = {Tao, T.},
  Journal                  = {J. Aust. Math. Soc.},
  Year                     = {2013},
  Number                   = {1},
  Pages                    = {50--105},
  Volume                   = {94},
}

\bib{OnFinite}{article}{
  Author                   = {Graham, R. L.},
  Title                    = {On finite sums of reciprocals of distinct $n$th powers},
  Journal                  = {Pacific J. Math.},
  Volume                   = {14},
  Year                     = {1964},
  Pages                    = {85--92},
}		

\bib{OnFiniteSums}{article}{
  Author                   = {Graham, R. L.},
  Title                    = {On finite sums of unit fractions},
  Journal                  = {Proc. London Math. Soc. (3)},
  Volume                   = {14},
  Year                     = {1964},
  Pages                    = {193--207},
}

\bib{PaulErdoes}{incollection}{
  Booktitle                = {in Erd\H{o}s Centennial, Bolyai Soc. Math. Stud., 25, (Lov{\'a}sz, L., Ruzsa, I. Z. and S{\'o}s, V. T., eds.), J\'anos Bolyai Math. Soc., Budapest, 2013},
  Title                    = {Paul Erd\H{o}s and Egyptian Fractions},
  Author                   = {Graham, R. L.},
  Pages                    = {289--309},
}

\bib{UnsolvedProblems}{book}{
  Title                    = {{Unsolved Problems in Number Theory}},
  Author                   = {Guy, R. K.},
  Publisher                = {Springer-Verlag, New York},
  Year                     = {2010},
  Edition                  = {third edition},
}

\bib{OnTheDistribution}{article}{
  Title                    = {On the distribution of divisors of integers in residue classes $({\rm mod}\,k)$},
  Author                   = {Hall, R. R.},
  Journal                  = {J. Number Theory},
  Volume                   = {2},
  Date                     = {1970},
  Pages                    = {168--188},
}
		
\bib{AnIntroduction}{book}{
  Title                    = {{An Introduction to the Theory of Numbers}},
  Author                   = {Hardy, G. H.},
  Author                   = {Wright, E. M.},
  Publisher                = {Oxford University Press, Oxford},
  Year                     = {2008},
  Edition                  = {sixth edition},
}

\bib{OnTheNumberOfCarmichael}{article}{
  Title                    = {On the number of Carmichael numbers up to $x$},
  Author                   = {Harman, G.},
  Journal                  = {Bull. London Math. Soc.},
  Volume                   = {37},
  Date                     = {2005},
  Number                   = {5},
  Pages                    = {641--650},
}

\bib{WattsMean}{article}{
  Title                    = {Watt's mean value theorem and Carmichael numbers},
  Author                   = {Harman, G.},
  Journal                  = {Int. J. Number Theory},
  Volume                   = {4},
  Year                     = {2008},
  Number                   = {2},
  Pages                    = {241--248},
}

\bib{DoubleExponential}{article}{
  Title                    = {Double exponential lower bound for the number of representations of unity by {E}gyptian fractions},
  Author                   = {Konyagin, S. V.},
  Note                     = {Translation of Mat. Zametki {{\bf{95}}} (2014), no. 2, 312--316},
  Journal                  = {Math. Notes},
  Volume                   = {95},
  Year                     = {2014},
  Number                   = {1-2},
  Pages                    = {277--281},
}

\bib{ExtraitDes}{article}{
  Title                    = {Extrait des exercises d'analyse num{\'e}rique},
  Author                   = {Lebesgue, V.-A.},
  Journal                  = {Nouvelles annales de math{\'e}matiques $1^{\text{re}}$ s\'{e}rie},
  Volume                   = {8},
  Year                     = {1849},
  Pages                    = {347--353},
}

\bib{ARigorous}{article}{
  Title                    = {A rigorous time bound for factoring integers},
  Author                   = {{Lenstra, Jr.}, H. W.},
  Author                   = {Pomerance, C.},
  Journal                  = {J. Amer. Math. Soc.},
  Volume                   = {5},
  Year                     = {1992},
  Number                   = {3},
  Pages                    = {483--516},
}

\bib{DenseEgyptian}{article}{
  Author                   = {Martin, G.},
  Title                    = {Dense Egyptian fractions},
  Journal                  = {Trans. Amer. Math. Soc.},
  Volume                   = {351},
  Year                     = {1999},
  Number                   = {9},
  Pages                    = {3641--3657},
}

\bib{DiophantineEquations}{book}{
  Title                   = {Diophantine Equations},
  Author                  = {Mordell, L. J.},
  Publisher               = {Academic Press, London and New York},
  Series                  = {Pure and Applied Mathematics, Vol. 30},
  Year                    = {1969},                      
}

\bib{SullEquazione}{article}{
  Author                  = {Rosati, L. A.},
  Title                   = {Sull'equazione diofantea $4/n= 1/x_1+ 1/x_2+1/x_3$},
  Journal                 = {Boll. Un. Mat. Ital. (3)},
  Volume                  = {9},
  Year                    = {1954},
  Pages                   = {59--63},
}
		
\bib{OnThe}{article}{
  Title                    = {On the number of solutions of the {D}iophantine equation {$\sum^n_{i=1}\frac{1}{x_i}=1$}},
  Author                   = {S{\'a}ndor, C.},
  Journal                  = {Period. Math. Hungar.},
  Volume                   = {47},
  Year                     = {2003},
  Number                   = {1-2},
  Pages                    = {215--219},
}

\bib{TheoryOf}{book}{
  Title                    = {Theory of Numbers},
  Author                   = {Stewart, B. M.},
  Publisher                = {The Macmillan Company, New York, Collier-Macmillan Limited, London},
  Year                     = {1964},
  Edition                  = {second edition},
}

\bib{IntroductionA}{book}{
  Title                    = {Introduction {\`a} la th{\'e}orie analytique et probabiliste des nombres},
  Author                   = {Tenenbaum, G.},
  Publisher                = {{\'E}ditions Belin, Paris},
  Year                     = {2008},
  Edition                  = {third edition},
}

\bib{UeberDieNullstellen}{thesis}{
  Title                    = {\"Uber die Nullstellen der Dirichletschen $L$-Funktionen und die kleinste Primzahl in einer arithmetischen Progression},
  Author                   = {Xylouris, T.},
  School                   = {Universit\"at Bonn},
  Year                     = {2011},
  Note                     = {PhD thesis},
}

\end{biblist}
\end{bibdiv}

\end{document}